\documentclass[11pt]{amsart}
\usepackage{amsfonts}
\usepackage[all,cmtip]{xy}
\usepackage{amsmath}
\usepackage{amsthm}
\usepackage{amssymb}
\newtheorem{thm}{Theorem}[section]
\newtheorem{conj}{Conjecture}[section]
\newtheorem{prop}[thm]{Proposition}
\newtheorem*{definition*}         {Definition}
\newtheorem{lemma}[thm]{Lemma}

\newcommand*{\Q}{\mathbb{Q}}
\newcommand*{\Qa}{\overline{\mathbb{Q}}}
\newcommand*{\Z}{\mathbb{Z}}
\newcommand*{\A}{\mathbb{A}}
\newcommand*{\K}{\mathbb{K}}
\newcommand*{\C}{\mathbb{C}}
\newcommand*{\F}{\mathbb{F}}

\newcommand*{\Disc}{\textrm{Disc}}
\DeclareMathOperator{\Gal}{Gal}
\newcommand*{\mult}{\textrm{mult}}
\title{The Existence of an abelian variety over $\Qa$ isogenous to no Jacobian }
\author{Jacob Tsimerman}
\begin{document}
\maketitle
\begin{abstract}
We prove the existence of an abelian variety $A$ of dimension $g$ over $\Qa$ which is not isogenous to any Jacobian, subject to the 
necessary condition $g>3$.
Recently, C.Chai and F.Oort gave such a proof assuming the Andr\'e-Oort conjecture. We modify their proof by constructing a special sequence of CM points for which
we can avoid any unproven hypotheses. We make use of various techniques from the recent work \cite{KY} of Klingler-Yafaev et al.

\end{abstract}

\section{Introduction}

This article is motivated by the following question of Nick Katz and Frans Oort: \emph{Does there exist an abelian variety of genus $g$ over $\Qa$ which is not isogenous 
to a Jacobian of a stable curve?}

For $g\leq 3$ the answer is no because every principally polarized abelian variety is a Jacobian, while for $g\geq 4$ the answer is expected to be yes. 
In \cite{CO}, C.Chai and F.Oort establish this under the Andr\'e-Oort conjecture, which we recall in section 2. In fact, they prove the following stronger statement:

\begin{thm}(\cite{CO})\label{isogenyAO}
Denote by $A_{g,1}$ the coarse moduli space of principally polarized abelian varieties of dimension $g$ defined over $\Qa$, and $X\subsetneq A_{g,1}$ be a proper closed subvariety.
Then assuming the Andr\'e-Oort conjecture, there exists a closed point $y=[A,\lambda]$ in $A_{g,1}$ such that for all points $x=[B,\lambda']$ in $X$, the abelian
varieties $A$ and $B$ are not isogenous.
\end{thm}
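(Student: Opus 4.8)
The plan is as follows. I would first reduce the statement to a pure existence question about isogeny classes. It suffices to produce a CM abelian variety $A$ of dimension $g$ admitting a principal polarization such that, writing $\mathcal I(A)=\{[B,\lambda']\in A_{g,1}(\Qa):B\text{ is isogenous to }A\}$, one has $\mathcal I(A)\cap X(\Qa)=\emptyset$; this is legitimate because every $g$-dimensional abelian variety is isogenous to a principally polarized one of dimension $g$, so nothing is lost by searching among CM abelian varieties. Now anything isogenous to a CM abelian variety is again of CM type, so $\mathcal I(A)$ consists of special points of $A_{g,1}$, and therefore $\mathcal I(A)\cap X$ is contained in the set of special points of $X$. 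Invoking the Andr\'e-Oort conjecture, the Zariski closure of that set is a finite union $Z_1\cup\dots\cup Z_m$ of special subvarieties of $A_{g,1}$, each \emph{proper} since contained in $X$. So it is enough to find $A$ with $\mathcal I(A)\cap Z_i=\emptyset$ for every $i$.

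Having fixed the $Z_i$, I would choose a CM field $E$ of degree $2g$ over $\Q$ subject to three conditions: (a) the Galois group of its Galois closure is the full hyperoctahedral group $(\Z/2\Z)^g\rtimes S_g$ --- which holds outside a sparse family of such fields; (b) the maximal totally real subfield $E^+$ is not isomorphic to the totally real field attached to any $Z_i$ of Hilbert--Blumenthal type; (c) $E$ is not the endomorphism algebra of the abelian variety underlying any zero-dimensional $Z_i$. Condition (a) forces every CM type on $E$ to be primitive and non-degenerate, so I may pick a CM type $\Phi$ with $A:=A_{E,\Phi}$ simple, $\mathrm{End}^0(A)=E$, and Mumford--Tate group equal to the full torus $T_E=\{x\in\mathrm{Res}_{E/\Q}\mathbb G_m:x\bar x\in\mathbb G_m\}$ of dimension $g+1$; finally I replace $A$ by an isogenous principally polarized abelian variety.

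I would then verify $\mathcal I(A)\cap Z_i=\emptyset$. Suppose $[B,\lambda']\in\mathcal I(A)\cap Z_i$. Since the Mumford--Tate group is an isogeny invariant, $\mathrm{MT}(B)=T_E$, and as $E$ acts irreducibly on $H^1(B)$ the torus $T_E$ is its own centralizer in $\mathrm{GSp}_{2g}$, hence a \emph{maximal} torus. Because $[B,\lambda']\in Z_i$ places $\mathrm{MT}(B)$ inside a conjugate of the generic Mumford--Tate group $G_i$ of $Z_i$, the torus $T_E$ is a maximal torus of $G_i$; thus $G_i$ has full rank $g+1$, and the root subsystem $\Phi(G_i,T_E)\subseteq\Phi(\mathrm{GSp}_{2g},T_E)$ of type $C_g$ is stable under the Galois action, which by (a) realizes the full Weyl group $W(C_g)$. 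The only $W(C_g)$-stable root subsystems of $C_g$ are $\emptyset$, $A_1^g$, $D_g$ and $C_g$. The cases $D_g$ and $C_g$ cannot occur: a connected $\Q$-group with either root system that admits a faithful $2g$-dimensional symplectic representation with $T_E$ as a maximal torus must already be $\mathrm{GSp}_{2g}$ --- the relevant small representations of $\mathrm{Spin}_{2g}$-type groups are orthogonal, and the only faithful $2g$-dimensional representation of an $\mathrm{Sp}_{2g}$-type group is the standard one --- contradicting the properness of $Z_i$. The case $\emptyset$ makes $G_i$ a torus, so $Z_i$ is a single CM point whose abelian variety has Mumford--Tate group $T_E$, hence endomorphism algebra $E$, against (c). The case $A_1^g$ forces $Z_i$ to be a component of a Hilbert--Blumenthal variety attached to a totally real \'etale $\Q$-algebra $L$ of degree $g$, so $L\hookrightarrow\mathrm{End}^0(B)=E$; then either $L$ is not a field, whence $B$ is isogenous to a nontrivial product --- impossible, $B$ being simple --- or $L=E^+$, against (b). This exhausts the possibilities.

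The main obstacle is this last step: the group-theoretic classification of which sub-Shimura data of $\mathrm{GSp}_{2g}$ yield a proper special subvariety whose generic Mumford--Tate group contains the maximal torus $T_E$, and the verification that a generic CM field escapes all of them. The ingredients that must be pinned down carefully are the identification of $\mathrm{MT}(A_{E,\Phi})$ with $T_E$ and of the Galois action on its character lattice with the full $W(C_g)$ when $E$ has generic Galois group --- a classical fact to be cited precisely --- the representation-theoretic exclusion of the $D_g$ and $C_g$ cases, and the elementary fact that a simple abelian variety is not isogenous to a nontrivial product. The reduction via Andr\'e-Oort and the choice of $E$ are then routine.
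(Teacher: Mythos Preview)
Your proposal is correct and follows essentially the Chai--Oort strategy that the paper cites rather than reproves. The organization differs slightly: the sketch in the paper assumes the theorem fails, so that $X$ contains Weyl CM points for \emph{every} Weyl CM field, applies Andr\'e--Oort, and then notes that the resulting finite list of proper special subvarieties cannot accommodate Hilbert modular varieties attached to infinitely many distinct totally real fields; you instead apply Andr\'e--Oort once to all special points of $X$, obtain the finite list $Z_1,\dots,Z_m$, and then exhibit a single Weyl CM field $E$ whose associated abelian variety avoids each $Z_i$. Both routes rest on the same key lemma---that a proper special subvariety of $A_{g,1}$ containing a Weyl CM point must be Hilbert modular---which the paper quotes as Lemma~3.5 of \cite{CO} (Lemma~\ref{hilbertmodular} here) and for which you supply your own root-system argument. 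Your contrapositive packaging has the pleasant feature that only one well-chosen CM field is needed, so the equidistribution input from \cite{CU} used in the paper's unconditional argument never enters. One small simplification to your case analysis: the $D_g$ case is actually vacuous, since the short roots of $C_g$ do not form a \emph{closed} subsystem (e.g.\ $(e_1-e_2)+(e_1+e_2)=2e_1$ is a long root), so the only Galois-stable closed root subsystems of $C_g$ are already $\emptyset$, $A_1^g$, and $C_g$.
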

The question about Jacobians follows by taking for $X$ the closed Torelli locus. 

	The way Theorem \ref{isogeny} is proven is roughly by looking at the sequence of
all CM points $y$, and using the fact that CM type is preserved under isogeny. Hence, if Theorem \ref{isogeny} is false, $X$ must contain points with
every possible CM type. One then applies the Andr\'e-Oort conjecture to conclude
that $X$ contains a finite set of Shimura sub-varieties containing CM points of each possible CM type. In \cite{CO}, this is ruled out using algebraic methods, 
finishing the proof. 
	
	In \cite{KY}, the Andr\'e-Oort conjecture is proven assuming the Generalized Riemann Hypothesis for Dedekind zeta functions of CM fields, henceforth referred 
to as `GRH'. The reason GRH is used is that they need to produce, for the CM fields $K$ that occur,  
many small split primes\footnote{here small is with respect to the Discriminant $D_K$}. Our idea is to construct an infinite sequence of CM fields which we 
can prove have many small split primes (of course, assuming GRH, they \textbf{all do}). 

We do this in section 3 by using a powerful equidistribution theorem  from Chavdarov\cite{C}, which is due to Nick Katz. We then go into the proof of Andr\'e-Oort 
in \cite{KY}, and carry it through for our sequence of CM points without assuming GRH. Finally, in section 4 we apply the arguments in \cite{CO} to 
our sequence. Thus, our main result is
\begin{thm}\label{isogeny}
Denote by $A_{g,1}$ the coarse moduli space of principally polarized abelian varieties of dimension $g$ over $\Qa$, and $X\subsetneq A_{g,1}$ be a proper closed subvariety.
Then there exists a closed point $y=[A,\lambda]$ in $A_{g,1}$ such that for all points $x=[B,\lambda']$ in $X$, the abelian
varieties $A$ and $B$ are not isogenous.
\end{thm}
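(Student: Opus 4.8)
The plan is to argue by contradiction, following the strategy of \cite{CO} but replacing its appeal to the Andr\'e--Oort conjecture by the unconditional material of Sections~3--4. Suppose then that $X\subsetneq A_{g,1}$ is a proper closed subvariety, which we may take to be defined over a number field, such that every closed point $[A,\lambda]$ of $A_{g,1}$ has $A$ isogenous to $B$ for some $[B,\lambda']\in X$. The key construction, carried out in Section~3, produces an infinite sequence of pairwise non-isomorphic CM fields $K_n$ with $[K_n:\Q]=2g$ such that $\Gal(\widetilde{K_n}/\Q)$ is the full hyperoctahedral group $(\Z/2\Z)^g\rtimes S_g$; such that $\Disc(K_n)\to\infty$, together with enough further genericity (for instance, the maximal totally real subfields also have discriminant tending to infinity) for the endgame below; and --- the point that replaces GRH --- such that every rational prime $q$ with $2g<q\le c(g)\log\Disc(K_n)$ splits completely in $K_n$. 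Under GRH such an abundance of small split primes holds for \emph{every} CM field; the content of Section~3 is to produce it unconditionally along a sequence.

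These fields arise from Nick Katz's equidistribution theorem, in the form given by Chavdarov~\cite{C}, applied to a family of genus-$g$ curves with full symplectic monodromy over $\F_p$ --- say hyperelliptic curves $y^2=f(x)$ with $f$ of degree $2g+1$. The numerator $P_f(T)\in\Z[T]$ of the zeta function of such a curve has all complex roots of absolute value $\sqrt p$, so when $P_f$ is irreducible the field $\Q[T]/(P_f)$ is a CM field of degree $2g$ with Galois group a subgroup of $(\Z/2\Z)^g\rtimes S_g$. One sieves over $f$: Chavdarov's theorem furnishes a positive proportion of $f$ for which $P_f$ is irreducible with splitting field having Galois group \emph{equal} to $(\Z/2\Z)^g\rtimes S_g$, while the quantitative form of the equidistribution theorem (with error term controlled by Deligne's bounds) makes the normalized coefficient vectors of the $P_f$ equidistribute finely enough to impose, simultaneously for every prime $q\le c(g)\log p$, the congruence condition that $P_f\bmod q$ be a product of distinct linear factors --- the point being that the modulus $\prod_{q\le c(g)\log p}q$ is at most a small power of $p$, hence negligible against the box in which the coefficient vectors range. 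For an $f$ meeting all these conditions, $K_n:=\Q[T]/(P_f)$ works: the congruence at $q$ says precisely that $q$ is unramified and totally split in the splitting field of $P_f$, hence in $K_n$, and the Weil bounds give $\log\Disc(K_n)\asymp\log p$; letting $p\to\infty$, each time discarding those $f$ whose associated field repeats structure already seen, yields the sequence.

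Every abelian variety over $\Qa$ is isogenous --- by dividing out a Lagrangian subgroup of the kernel of a polarization --- to a principally polarized one of the same dimension, so each $K_n$ is the endomorphism algebra of some CM point $x_n=[B_n,\lambda_n]\in A_{g,1}(\Qa)$. By the standing assumption there is a point $x_n'=[B_n',\lambda_n']\in X$ with $B_n'$ isogenous to $B_n$; since an isogeny induces a $K_n$-equivariant isomorphism of tangent spaces, $B_n'$ again has CM field $K_n$, with the same CM type, and in particular the same reflex field $E_n\subseteq\widetilde{K_n}$ as $x_n$. Thus the $x_n'$ are infinitely many distinct special points of $A_{g,1}$ lying in $X$, and by the construction there are many rational primes below $(\log\Disc(K_n))^{O(1)}$ that split completely in $E_n$ --- exactly the input that \cite{KY} extract from GRH (the CM order of $B_n'$ may be non-maximal, but the invariants entering the Klingler--Yafaev argument, the reflex field among them, are isogeny-invariant, and the supply of small split primes is ample enough to avoid the finitely many excluded primes). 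Running the Klingler--Yafaev proof of Andr\'e--Oort for the sequence $(x_n')$, which by this construction goes through unconditionally, we conclude that the Zariski closure $Z:=\overline{\{x_n':n\ge 1\}}$ is a finite union $Z_1\cup\dots\cup Z_m$ of special subvarieties of $A_{g,1}$, with $Z\subseteq X\subsetneq A_{g,1}$, so every $Z_i$ is proper.

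It remains to derive a contradiction from the fact that $Z=Z_1\cup\dots\cup Z_m\subseteq X$ is a finite union of proper special subvarieties containing all the $x_n'$. Some $Z_{i_0}$ contains $x_n'$ for infinitely many $n$, and since a zero-dimensional special subvariety is a single point while the $x_n'$ are distinct, $\dim Z_{i_0}\ge 1$. But the CM points carried by a fixed positive-dimensional proper special subvariety of $A_{g,1}$ are severely constrained --- this is the algebraic content of \cite{CO}; in the basic case of a Hilbert modular subvariety they must all share a fixed degree-$g$ maximal totally real subfield, and \cite{CO} establish the analogous restrictions in general --- whereas the $K_n$ were built generic enough (full Galois group, and unbounded both in discriminant and in their real subfields) to escape every such constraint. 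This contradiction proves the theorem. The main obstacle is the construction of the second paragraph: the delicate point is that one needs the error term in Katz's equidistribution theorem to be strong enough to force the splitting condition at \emph{all} primes up to $c(g)\log p$ simultaneously, so that the resulting supply of small split primes lands in the regime required by the Klingler--Yafaev argument --- and this quantitative input is precisely what stands in for GRH.
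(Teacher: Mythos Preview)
Your overall strategy matches the paper's, but there is a genuine gap in the step ``running the Klingler--Yafaev proof \dots goes through unconditionally.'' You have identified one use of GRH in \cite{KY} --- the Galois-orbit lower bound, which needs only $\asymp\log D/\log\log D$ small split primes --- and your construction furnishes that. But GRH enters a second time, in Proposition~9.1 of \cite{KY} (Proposition~\ref{smallprime} here): one must produce a split prime $l<c(\log D_i)^6\beta_i^{\epsilon}$ at which the Mumford--Tate group $MT(x_i)_{/\F_l}$ is a torus, where $\beta_i=\prod_{p:\,MT\text{ not a torus}}(Bp)$. The primes failing the torus condition are not ``finitely many excluded primes'' as you write; a priori every prime in your constructed range could fail. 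The paper handles this by a dichotomy: either some constructed split prime already works, or all of them fail, forcing $\beta_i\gg e^{(\log D_i)^4}$, and then one invokes the effective Chebotarev theorem of Lagarias--Odlyzko to find split primes up to $\beta_i^{\epsilon}$. That last appeal requires the Dedekind zeta function of the Galois closure to have no Siegel zero; the paper secures this by passing to a subsequence (Lemma~\ref{greatfields}) via Heilbronn's theorem and zero-repulsion. You do not address Siegel zeros at all.

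There is a related quantitative mismatch. Your construction forces every prime up to $c(g)\log D$ to split, while the paper (Lemma~\ref{goodweylfields}) produces $\asymp(\log D)^5/\log\log D$ split primes of size $\asymp(\log D)^5$. The exponent $5$ is not cosmetic: it is what makes $\beta_i$ super-polynomial in $D_i$ in the bad case, so that $\beta_i^{\epsilon}$ exceeds the threshold $e^{(\log D_i)^3}$ above which the Siegel-zero-free effective Chebotarev applies. Your range would only yield $\beta_i\gg D_i^{c(g)}$, which for the small $\epsilon$ demanded by Proposition~\ref{smallprime} is insufficient. (The paper achieves the larger range by working over $\F_{q^n}$ with $q$ fixed and $n\to\infty$, averaging over an auxiliary prime in $[n^5,2n^5]$, rather than over $\F_p$ with $p\to\infty$ as you propose; this also keeps the implied constant in Theorem~\ref{averagefrob} fixed.) Finally, your endgame --- pigeonhole into one special component of the Zariski closure --- differs from the paper's, which extracts from \cite{KY} only that each $x_i$ lies in some special $S_i\subset X$, identifies each $S_i$ as Hilbert modular via Lemma~\ref{hilbertmodular}, and then uses Clozel--Ullmo equidistribution \cite{CU} to force $S_i$ to fill out $A_{g,1}$; but this last difference is harmless once the earlier gap is repaired.
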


We point out that we make no progress on the Andr\'e-Oort conjecture
itself, as the conjecture is about the `worst' possible sequence of CM points, whereas we only show that it holds for certain carefully constructed sequences.

\textbf{Acknowledgments}: It is a pleasure to thank Frans Oort for generously sharing his ideas. Nick Katz was kind enough to clarify
Theorem \ref{averagefrob}. Thanks to Ali Altug who read through a preliminary version of this note and made helpful remarks. Finally, thanks to Peter Sarnak
who introduced the author to the subject matter and helped improve the exposition.

\section{Notation and Background}

\subsection{Weyl CM fields}
Following \cite{CO}, we say that a field $L$ of degree $2g$ is of Weyl CM type if it is a totally complex quadratic extension of a totally real field $F$, and if the Galois group
of the normal closure $M$ of $L$ is $W_g:= (\Z/2\Z)^g\ltimes S_g$, where the action is by permutation of the $\Z/2\Z$'s. 

One can think of this concretely in the 
following
way: let $phi_1,\phi_2,\dots \phi_g$ be $g$ distinct embeddings of $L$ into $\C$, such that no two of them are conjugate. Then 
$\phi_1,\overline{\phi_1},\dots,\phi_g,\overline{\phi_g}$ are all the embeddings of $L$ into $\C$, and hence $\Sigma=(\phi_1,\phi_2,\dots \phi_g)$ is a $CM$-type for $L$. 
Concretely, an element $h\in W_g$ permutes the pairs of embeddings $P_i=(\phi_i,\overline{\phi_i})$. We thus get an element of the group $S_g$ together with $g$ 
choices of sign. Let $S\in W_g$ be the set of elements inducing one of the embeddings $\phi_i$ on $L$. Define $H,H^* \subset W_g$ by 
\begin{align*} 
&h\in H \longleftrightarrow hS=S \\
&h^*\in H^* \longleftrightarrow h^*S^{-1}=S^{-1}.
\end{align*}

One can see that $H$ is the group $W_{g-1}$ of all elements that fix the pair $P_1$ and is the Galois group $\Gal(M/L)$. Also, $H^*$ can be seen to be the set of 
all elements that take each pair $P_i$ to a pair $P_j$ such that $\phi_i$ goes to $\phi_j$. The reflex field of $H^*$ is thus a field $L^*$ of degree $2^g$
with $CM$ type induced by $S^{-1}$. 

\subsection{Shimura Varieties and the Andr\'e-Oort conjecture}

	Here we recall some of the basic theory of Shimura varieties. For more details, we refer to \cite{D1} and\cite{D2}.
A Shimura variety is a pair $(G,X)$, where $G$ is a reductive algebraic group acting on a hermitian symmetric space $X$, 
together with a compact subgroup $K$ of $G(\A_f)$, where $\A_f$ are the finite Adelles. 
Define the space $Sh(G,X)_K := G(\Q)\backslash X\times G(\A_f)/ K$, which is then naturally endowed with
the structure of a quasi-projective algebraic variety over $\Qa$. 
Given another Shimura variety $Sh(G_1,X_1)_{K_1}$ and a pair of morphisms $G_1\rightarrow G, X_1\rightarrow X$ which respect the group actions and
send $K_1$ to $K$, we get a map $Sh(G_1,X_1)_{K_1}\rightarrow Sh(G,X)_K$. A \emph{Shimura subvariety} of $Sh(G,X)_K$ is defined to be
an irreducible component of a ``Hecke translate''
by an element of $G(\A_f)$ of such an image.A Shimura subvariety of 
dimension 0 is called a \emph{special point}.
	
	An important special case of a Shimura variety is the moduli space of principally polarized abelian varieties $A_{g,1}$. 
It corresponds to the pair $(Sp_{2g},\mathbb{H}_g)$ together with the standard maximal compact subgroup of $Sp_{2g}(\A_f)$. 
In this case special points correspond exactly to abelian varieties with complex multiplication.

\begin{conj}(Andr\'e-Oort)\label{AO}
Let $S$ be a Shimura variety, and $\Gamma\subset S$ be a set of special points in $S$. Then the Zariski closure of $\Gamma$ is a finite union of Shimura
subvarieties.
\end{conj}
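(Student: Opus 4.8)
The statement is the Andr\'e--Oort conjecture, which is a deep open problem; the present paper, as stressed above, does not prove it but rather circumvents it for a carefully chosen family of CM points. Nonetheless, the plan for the conditional proof --- the one carried out in \cite{KY} --- runs as follows. One first makes standard reductions: it suffices to treat the case in which $\Gamma$ is an infinite sequence of special points, its Zariski closure $Z$ is irreducible, and $Z$ is Hodge-generic, i.e.\ not contained in any proper Shimura subvariety of $S$; the goal is then to prove $Z=S$, and one argues by induction on $\dim S$. The two essential inputs are a lower bound for the size of the Galois orbit of a special point, and a characterization of Shimura subvarieties as the subvarieties stable under sufficiently many Hecke correspondences.

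For the first input, attach to each special point $s\in\Gamma$ its reflex field $E_s$, a CM field, and use the theory of canonical models: $\Gal(\Qa/E_s)$ acts on $s$ via an explicit reciprocity law, and one wants a bound of the shape $|\Gal(\Qa/E_s)\cdot s|\gg_\varepsilon |\Disc(E_s)|^{\delta-\varepsilon}$ for some fixed $\delta>0$. Establishing such a bound requires controlling the image of the reciprocity map in a class group, and this is exactly where GRH for the Dedekind zeta functions of the fields $E_s$ is invoked in \cite{KY}: it guarantees enough small split primes to pin down the class-group behaviour.

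For the second input one exploits the interaction of Frobenius with Hecke correspondences. If $p$ is a prime of good reduction that is split in $E_s$, then the Eichler--Shimura relation together with the Shimura--Taniyama reciprocity law shows that all Galois conjugates of $s$ lie in the image $T_p(s)$ of a suitable Hecke correspondence; hence the whole Galois orbit of $s$ is contained in $Z\cap T_pZ$. If $Z$ were not stable under $T_p$, then $Z\cap T_pZ$ would be a proper closed subset of $Z$ of degree $O(\deg T_p)$, which grows only polynomially in $p$. Choosing $p$ in a range of size a small power of $|\Disc(E_s)|$ (again available under GRH) and comparing with the Galois lower bound of the previous paragraph forces a contradiction once $|\Disc(E_s)|$ is large. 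Therefore $Z$ is stable under $T_p$ for infinitely many primes $p$.

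The final step converts stability under infinitely many Hecke correspondences into the conclusion that $Z$ is a Shimura subvariety. This is the ergodic-theoretic theorem of Clozel--Ullmo, resting ultimately on Ratner's measure-classification results: the union of the $T_p$-orbits through a Hodge-generic point equidistributes for the canonical probability measure on $S$, so a Hodge-generic closed subvariety stable under infinitely many $T_p$ must equal $S$. Putting the pieces together yields $Z=S$ and closes the induction. I expect the principal obstacle to be precisely the one this paper is designed to dodge: the unconditional Galois-orbit lower bound, which in general is not known because the requisite density of small split primes in the CM fields $E_s$ is not known --- hence the reliance on GRH in \cite{KY}, and the more modest but unconditional route taken here of engineering a sequence of CM points for which the split-prime estimate can be proved directly, via the equidistribution theorem of Katz recalled in \cite{C}.
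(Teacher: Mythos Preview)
The paper contains no proof of this statement: it is recorded as Conjecture~\ref{AO}, and the author explicitly writes ``we make no progress on the Andr\'e-Oort conjecture itself.'' You correctly recognize this and instead outline the conditional argument of \cite{KY}. Your sketch is broadly faithful to the Klingler--Yafaev strategy --- reduction to a Hodge-generic irreducible $Z$, lower bounds on Galois orbits of special points via the reciprocity law (this is where GRH enters, to supply small split primes), comparison with degrees of Hecke correspondences, and finally the equidistribution input of Clozel--Ullmo \cite{CU} --- and you correctly identify the Galois-orbit bound as the bottleneck that the present paper sidesteps by manufacturing CM fields with many small split primes. Since there is no proof in the paper to compare against, nothing further can be said; your acknowledgment that the conjecture remains open and that the paper only treats a hand-picked sequence of CM points is exactly right.
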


We call a point $x\in A_{g,1}$ a \emph{Weyl CM point} if the associated abelian variety has complex multiplication by a Weyl CM field of degree $2g$. 

\subsection{Siegel Zeroes and Totally Split Primes}

Later on we shall need to produce totally split primes in algebraic number fields, so we collect the results here for convenience.
Fix $d>0$ throughout this section. Take $K$ to be a Galois extension of $\Q$ of degree $d$ and discriminant $D_K$. For a real number $X>$, define by $N_K(X)$ to 
be the number of primes $p<X$ such that $p$ is a totally split prime in $K$. By Chebotarev's density theorem, we know that $N_K(X)$ is asymptotic to 
$\frac{X}{d\cdot \log(X)}$. However, we shall need a quantified version of this result. For this, we introduce the concept of an exceptional (Siegel) zero:

\begin{thm}
There exists a $C_d>0$ depending only on $d$ such that the Dedekind zeta function $\zeta_K(s)$ has at most one real zero in the range
$$1-\frac{C_d}{\log(D_K)}\leq \sigma < 1.$$ Such a zero, if it exists, is called an exceptional zero, or Siegel zero.
\end{thm}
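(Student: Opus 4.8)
The plan is to run the classical Hadamard-product (``zero-repulsion'') argument for Dedekind zeta functions, the only real care being to keep all error terms uniform in $K$ for $d$ fixed. Let $r_1,r_2$ be the numbers of real and complex places of $K$ (so $r_1+2r_2=d$), and let
\[
\xi_K(s)=s(s-1)\,|D_K|^{s/2}\bigl(\pi^{-s/2}\Gamma(s/2)\bigr)^{r_1}\bigl(2(2\pi)^{-s}\Gamma(s)\bigr)^{r_2}\zeta_K(s)
\]
be the completed zeta function; it is entire of order $1$ and satisfies $\xi_K(s)=\xi_K(1-s)$. Taking the logarithmic derivative of the Hadamard factorisation $\xi_K(s)=e^{a_K+b_Ks}\prod_\rho(1-s/\rho)e^{s/\rho}$, the product running over the nontrivial zeros $\rho$ of $\zeta_K$, and comparing with the explicit shape of $\xi_K$, one gets the partial-fraction expansion
\[
\frac{\zeta_K'}{\zeta_K}(s)=b_K+\sum_\rho\Bigl(\frac{1}{s-\rho}+\frac{1}{\rho}\Bigr)-\frac{1}{s}-\frac{1}{s-1}-\frac{1}{2}\log|D_K|-A_\infty(s),
\]
where $A_\infty(s)=r_1\bigl(\tfrac12\tfrac{\Gamma'}{\Gamma}(s/2)-\tfrac12\log\pi\bigr)+r_2\bigl(\tfrac{\Gamma'}{\Gamma}(s)-\log(2\pi)\bigr)$ collects the archimedean factors.

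Evaluating the functional equation at $s=\tfrac12$ forces $\tfrac{\xi_K'}{\xi_K}(\tfrac12)=0$, which, together with the symmetry $\rho\mapsto 1-\rho$ of the zero set, gives $b_K=-\sum_\rho\operatorname{Re}(1/\rho)$, a real number. Now fix a real $\sigma\in(1,2)$ and take real parts in the displayed identity: the $b_K$ term cancels $\sum_\rho\operatorname{Re}(1/\rho)$; the digamma values $\tfrac{\Gamma'}{\Gamma}(\sigma/2)$ and $\tfrac{\Gamma'}{\Gamma}(\sigma)$ are bounded on $(\tfrac12,1)$ and $(1,2)$, so $|A_\infty(\sigma)|+1/\sigma\le c_1d$ for a suitable absolute constant $c_1$, uniformly in $K$; and $\tfrac{\zeta_K'}{\zeta_K}(\sigma)=-\sum_{\mathfrak a}\Lambda_K(\mathfrak a)(N\mathfrak a)^{-\sigma}<0$ since $\zeta_K$ has non-negative Dirichlet coefficients. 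As each summand $\operatorname{Re}\tfrac{1}{\sigma-\rho}=\tfrac{\sigma-\operatorname{Re}\rho}{|\sigma-\rho|^{2}}$ is positive (because $\operatorname{Re}\rho<1<\sigma$), rearranging and discarding the negative term yields the basic inequality
\[
\sum_\rho\operatorname{Re}\frac{1}{\sigma-\rho}\ \le\ \frac{1}{\sigma-1}+\frac{1}{2}\log|D_K|+c_1d ,
\]
valid for all $\sigma\in(1,2)$ and all $K$ of degree $d$.

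Suppose now, for contradiction, that $\zeta_K$ has two real zeros $\beta_1,\beta_2$ (counted with multiplicity) in the interval $[1-c/\log|D_K|,\,1)$, and specialise the basic inequality to $\sigma=1+1/\log|D_K|$, which lies in $(1,2)$ once $|D_K|>e$. Then $\sigma-\beta_i\le (1+c)/\log|D_K|$, so keeping on the left only the two terms coming from $\beta_1,\beta_2$ gives
\[
\frac{2\log|D_K|}{1+c}\ \le\ \frac{3}{2}\log|D_K|+c_1d ,
\]
that is, $\tfrac{2}{1+c}\le\tfrac32+\tfrac{c_1d}{\log|D_K|}$. Choosing $c=\tfrac18$ makes the left-hand side $\tfrac{16}{9}>\tfrac32$, so this fails as soon as $\log|D_K|$ exceeds $\tfrac{18}{5}c_1d$; hence there is a bound $D_0(d)$, depending only on $d$, such that $\zeta_K$ has at most one real zero in $[1-\tfrac18/\log|D_K|,\,1)$ whenever $|D_K|>D_0(d)$.

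Finally, the fields $K$ of degree $d$ with $|D_K|\le D_0(d)$ are finite in number by the Hermite--Minkowski theorem, and for each of them the nontrivial zeros of $\zeta_K$ are isolated, so there is an $\varepsilon_K>0$ for which $[1-\varepsilon_K,1)$ contains at most one real zero of $\zeta_K$. Taking $C_d$ less than $\tfrac18$ and less than $\varepsilon_K\log|D_K|$ for each of these finitely many fields (recall $|D_K|\ge 3$, hence $\log|D_K|>0$, for $d\ge2$) yields a single constant $C_d>0$ with the asserted property. The whole argument is classical; the only point needing attention is the uniformity in $K$ --- that $b_K$, the archimedean term $A_\infty$ and the constant $c_1$ depend only on $d$ --- together with the routine bookkeeping for the finitely many small-discriminant fields.
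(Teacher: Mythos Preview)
The paper does not actually prove this theorem: it is stated in Section~2.3 as a classical fact being recalled for later use, with no proof given (the reader is pointed to Chapter~5 of Iwaniec--Kowalski for the surrounding theory). So there is no ``paper's own proof'' to compare against; your task reduces to whether your argument is correct on its own.

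It is. What you wrote is exactly the standard Hadamard-product repulsion argument (as in Lemma~5.9 of Iwaniec--Kowalski or Stark's 1974 paper), and you have correctly tracked the dependence on $K$: the only $K$-dependent quantities entering the basic inequality are $\tfrac12\log|D_K|$ and the archimedean term, the latter bounded purely in terms of $d=r_1+2r_2$. Two small remarks on presentation. First, the identity $b_K=-\sum_\rho\operatorname{Re}(1/\rho)$ is more cleanly obtained by comparing $\xi_K'/\xi_K(s)$ with $-\xi_K'/\xi_K(1-s)$ and using the $\rho\mapsto 1-\rho$ symmetry of the zero set, rather than by specialising to $s=\tfrac12$; your conclusion is correct either way. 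Second, for the finitely many small-discriminant fields, ``zeros are isolated'' is slightly weaker than what you need (it would not by itself exclude a multiple real zero arbitrarily close to $1$); the clean statement is that $\zeta_K$ has a \emph{pole} at $s=1$, hence is nonvanishing on some interval $[1-\varepsilon_K,1)$, which certainly contains at most one zero. With those cosmetic fixes the proof is complete and self-contained.
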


Exceptional zeroes, though conjectured to not exist, must be entertained all over analytic number theory , and the reason they are important for us is the following result, due to Lagarias and Odlyzko \cite{La}:

\begin{thm}\label{primenumbertheorem}
For $K$ a Galois number field of degree $d$, we have
$$N(K,X) = \frac{X}{\log(X)} + O\left(\frac{X^{\beta}}{\log(X)}\right) + O\left(\frac{\sqrt{|D_K|}Xe^{-C_d\sqrt{\log(X)}}}{\log(X)}\right).$$
where $\beta$ is the possible exceptional zero of $\zeta_K(s)$. 
The $O\left(\frac{X^{\beta}}{\log(X)}\right)$ term should be removed if there is no exceptional zero.
\end{thm}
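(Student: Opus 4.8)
\textbf{The plan} is to prove the effective prime ideal theorem for $K$ — the trivial-conjugacy-class case of effective Chebotarev — along the classical lines of Lagarias and Odlyzko, keeping careful track of the dependence on $D_K$ and isolating the possible exceptional zero. (Here $N(K,X)$ is read as the number of prime ideals of $K$ of norm below $X$, so that, as this subsection needs, the count $N_K(X)$ of totally split rational primes equals $\tfrac1d N(K,X) + O(\sqrt X + \log D_K)$.) It is convenient to pass first through the weighted function $\psi_K(x) = \sum_{N\mathfrak{a}\le x}\Lambda_K(\mathfrak{a})$, where $\mathfrak{a}$ runs over integral ideals and $\Lambda_K(\mathfrak{p}^m)=\log N\mathfrak{p}$ with $\Lambda_K$ vanishing off prime powers; this is the summatory function of the Dirichlet coefficients of $-\zeta_K'/\zeta_K(s)$. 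Prime ideals of residue degree $\ge 2$ and proper prime powers contribute only $O(\sqrt x\,\log x)$ to $\psi_K(x)$, so the degree-one primes dominate, and a standard partial summation recovers $N(K,x) = \#\{\mathfrak{p}: N\mathfrak{p}\le x\}$ from $\psi_K$ — this last step is what inserts the $\log x$ into the denominators of the claimed formula.

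\textbf{Step 1.} I would start from the truncated explicit formula obtained by applying Perron's formula to $-\zeta_K'/\zeta_K$ and shifting the contour to the left:
\[
\psi_K(x) \;=\; x \;-\; \sum_{|\gamma'|\le T}\frac{x^{\rho}}{\rho} \;+\; O\!\left(\frac{x\log x\,\log(D_K x^{d})}{T} + \log D_K\,\log x\right),
\]
the sum running over nontrivial zeros $\rho=\beta'+i\gamma'$ of $\zeta_K$ with $|\gamma'|\le T$ and the leading term coming from the simple pole at $s=1$. The required inputs are the Hadamard factorization and functional equation of $\zeta_K$, which yield the zero-counting bound that the number of zeros with $|\gamma'-t|\le 1$ is $O(\log D_K + d\log(|t|+2))$; this governs both the truncation error and the size of the sum over zeros.

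\textbf{Step 2.} I would then use the classical zero-free region for $\zeta_K$, of the shape $\sigma>1-c_d/\log\!\big(D_K(|\gamma'|+2)^{d}\big)$ with at most one exception — the real Siegel zero $\beta$ of the Theorem stated just above. Extracting the term $x^{\beta}/\beta$ of that zero (which becomes the $O(X^{\beta}/\log X)$ term, deleted when $\beta$ does not exist), the remaining $\sum' x^{\rho}/\rho$ is bounded by inserting $x^{\beta'}\le x^{1-c_d/\log(D_K T^{d})}$ and summing against the zero count; choosing $T$ with $\log T\asymp\sqrt{\log x}$ then turns this into $x\exp(-c_d\sqrt{\log x})$ times a power of $D_K$. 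A final partial summation converts the estimate for $\psi_K$ into the one for $N(K,x)$ as written, with $C_d$ depending only on $d$.

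\textbf{The main obstacle} is pinning down the error term in precisely the sharp form stated: the factor $\sqrt{|D_K|}$ — rather than a larger power of the discriminant — and a constant $C_d$ in the exponential that is uniform over all degree-$d$ fields and independent of $K$ itself. This needs the careful zero-free region and zero-density bookkeeping of Lagarias and Odlyzko, together with the separate, delicate treatment of the exceptional real zero (including a Deuring--Heilbronn repulsion input to push the remaining zeros back when $\beta$ is present). The rest — the contour shift, the zero count, the two partial summations — is routine, so for the quantitative details I would refer to \cite{La}.
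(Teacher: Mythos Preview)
The paper does not give its own proof of this theorem at all: it simply quotes the result as ``due to Lagarias and Odlyzko \cite{La}'' and moves on. Your proposal sketches exactly the classical Lagarias--Odlyzko argument (explicit formula for $\psi_K$, standard zero-free region with a possible exceptional zero, choice of $T$ with $\log T\asymp\sqrt{\log x}$, partial summation) and then, like the paper, defers to \cite{La} for the sharp constants; so you are doing precisely what the paper does, only with the ideas spelled out.
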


It is a well established principle that exceptional zeroes, if they exist at all, are very rare. We recall this below and later we shall construct our 
CM fields so as to avoid exceptional zeroes.
By the following result of Heilbronn \cite{H}, exceptional zeroes can genuinely show up only in degree $2$ extensions.

\begin{thm}\label{quadsubfield}
If $K$ is a Galois number field with $\beta$ as an exceptional zero of $\zeta_K(s)$, then there is a quadratic field $F\subset K$ with $\zeta_F(\beta)=0$, so 
that $\beta$ is an exceptional zero of $\zeta_F(s)$.
\end{thm}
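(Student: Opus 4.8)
The plan is to run the classical \emph{Heilbronn character} argument, designed precisely to avoid invoking Artin's conjecture for the non-abelian factors of $\zeta_K$. Write $G=\Gal(K/\Q)$, and for $\chi\in\mathrm{Irr}(G)$ set $n_\chi:=\mathrm{ord}_{s=\beta}L(s,\chi)\in\Z$; this is a well-defined integer even though $L(s,\chi)$ is so far only known to be meromorphic, since order of vanishing is additive over products and quotients and Artin $L$-functions are multiplicative in the (virtual) representation. Form the virtual character $\theta:=\sum_{\chi\in\mathrm{Irr}(G)}n_\chi\,\chi$. Then $\theta(1)=\mathrm{ord}_{s=\beta}\zeta_K(s)$ and $\langle\theta,1_G\rangle=\mathrm{ord}_{s=\beta}\zeta(s)$, and, by Frobenius reciprocity together with the induction identity $L(s,\mathrm{Ind}_C^G\psi)=L(s,\psi)$, for every cyclic $C\le G$ and every $\psi\in\mathrm{Irr}(C)$ one has $\langle\theta|_C,\psi\rangle=\mathrm{ord}_{s=\beta}L(s,\psi)$ (the $L$-function over the fixed field $K^C$), and in particular $\langle\theta|_C,1_C\rangle=\mathrm{ord}_{s=\beta}\zeta_{K^C}(s)$. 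Finding a quadratic $F\subseteq K$ with $\zeta_F(\beta)=0$ amounts to finding $\chi_0\in\mathrm{Irr}(G)$ of order $2$ with $n_{\chi_0}>0$, since then $F=K^{\ker\chi_0}$ and $\mathrm{ord}_{s=\beta}\zeta_F=\mathrm{ord}_{s=\beta}\bigl(\zeta(s)L(s,\chi_0)\bigr)=n_{\chi_0}>0$.

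First I would assemble the standard analytic inputs; replacing $C_d$ by a smaller constant if necessary (the conclusion only strengthens as $C_d$ shrinks) and using the conductor--discriminant formula to ensure every relevant conductor divides $D_K$, I would arrange that $\beta$ simultaneously lies in the exceptional range of $\zeta_{K^H}$ for all $H\le G$, of $L(s,\psi)$ for every nontrivial character $\psi$ of a subgroup, and of $\zeta_F$ for every quadratic $F\subseteq K$; the degree-$d$ bookkeeping of the Siegel-zero theorem then suffices. With this in hand: (a) nontrivial abelian $L$-functions are entire (Hecke) and Dedekind zeta functions are holomorphic off $s=1$, so $\mathrm{ord}_{s=\beta}L(s,\psi)\ge 0$ for $\psi\ne 1$ and $\mathrm{ord}_{s=\beta}\zeta_{K^H}\ge 0$ for all $H$; (b) complex (non-real) characters have no Siegel zero --- by the usual argument, that a zero at $\beta$ would produce a double zero at $\beta$ of the Dedekind zeta of the cyclic extension cut out by $\psi$, whose Dirichlet coefficients are non-negative --- so $\mathrm{ord}_{s=\beta}L(s,\psi)=0$ whenever $\psi$ has order $\ge 3$; and (c) the exceptional zero of $\zeta_K$ is simple, so $\theta(1)=\mathrm{ord}_{s=\beta}\zeta_K(s)=1$, while $\langle\theta,1_G\rangle=\mathrm{ord}_{s=\beta}\zeta(s)=0$.

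The core of the argument is to show that $\theta$ is itself an irreducible linear character of order $2$. For any cyclic $C\le G$, item (a) gives $\langle\theta|_C,\psi\rangle\ge 0$ for all $\psi\in\mathrm{Irr}(C)$, so $\theta|_C$ is a genuine character of $C$; since it has degree $\theta(1)=1$ it must be a single linear character. Applying this with $C=\langle g\rangle$ for each $g\in G$ shows $|\theta(g)|=1$ for all $g$, hence $\langle\theta,\theta\rangle_G=1$; since $\theta$ is a virtual character this forces $\theta=\pm\chi$ for an irreducible $\chi$, and $\theta(1)=1>0$ forces $\theta=\chi$ with $\chi$ linear, while $\langle\theta,1_G\rangle=0$ rules out $\chi=1_G$. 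Finally $n_\chi=\langle\theta,\chi\rangle=1\ne 0$, so by (b) $\chi$ cannot have order $\ge 3$, hence $\chi$ has order exactly $2$ and $\chi_0:=\chi$ does the job. I expect the real obstacle to be exactly the unavailability of Artin's conjecture: one cannot argue zero-by-zero with the individual $L(s,\chi)$, and the entire role of passing to $\theta$ and of facts (a)--(c) is to trade holomorphy of the non-abelian factors for the inductivity of $\mathrm{ord}_{s=\beta}(\cdot)$ together with the positivity and entireness statements that genuinely hold for the abelian constituents. A smaller but real nuisance is the uniform choice of constant in the second paragraph, which is the one place where the hypothesis that $K/\Q$ is Galois --- so that all the relevant conductors divide $D_K$ --- actually gets used.
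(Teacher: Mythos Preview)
Your argument is the classical Heilbronn virtual-character proof and it is correct: the key steps---defining $\theta=\sum_\chi n_\chi\chi$, using inductivity and Frobenius reciprocity to identify $\langle\theta|_C,\psi\rangle$ with $\mathrm{ord}_{s=\beta}L(s,\psi)$ over $K^C$, deducing $|\theta(g)|=1$ from the entireness of abelian $L$-functions, and then ruling out order $\ge 3$ via the double zero $L(\beta,\chi)=L(\beta,\bar\chi)=0$ in the cyclic sub-zeta---are all sound, and your remark about shrinking $C_d$ to keep $\beta$ in the exceptional range of every $\zeta_{K^H}$ is exactly the bookkeeping needed.

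As for comparison with the paper: there is nothing to compare. The paper does not prove this theorem; it merely records it as a result of Heilbronn with a citation to \cite{H} and then invokes it in the proof of Lemma~\ref{greatfields}. What you have written is essentially Heilbronn's own argument, so you are supplying the proof the paper omits rather than offering an alternative to one it gives.
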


For quadratic fields we have the following repulsion result:

\begin{thm}\label{repulsion}
Let $F_1,F_2$ be two distinct quadratic number fields of Discriminants $D_1,D_2$ respectively, and let $\beta_1,\beta_2$ be real zeroes of 
$\zeta_{F_1}(s),$ and $\zeta_{F_2}(s)$ respectively. There exists an absolute constant $c>0$ such that 
$$\min(\beta_1,\beta_2)< 1- \frac{c}{\log(D_1D_2)}.$$
\end{thm}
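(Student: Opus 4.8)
The plan is to run the classical argument of Landau (the Deuring--Heilbronn repulsion phenomenon), packaging the two Dedekind zeta functions into a single auxiliary Dirichlet series with non-negative coefficients. Write $\zeta_{F_i}(s)=\zeta(s)L(s,\chi_i)$, where $\chi_i$ is the primitive quadratic (Kronecker) character of conductor $|D_i|$ cut out by $F_i$. Since $\zeta(s)<0$ on $(0,1)$ has no real zero there, any real zero $\beta_i$ of $\zeta_{F_i}$ in $(0,1)$ is in fact a zero of $L(s,\chi_i)$; moreover we may assume $\beta_1,\beta_2\in(1/2,1)$, since otherwise $\min(\beta_1,\beta_2)\leq 1/2<1-\frac{c}{\log(D_1D_2)}$ for $c$ small. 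Because $F_1\neq F_2$ we have $\chi_1\neq\chi_2$, hence $\chi_1\chi_2$ is a nontrivial quadratic character whose conductor divides $|D_1D_2|$, and therefore
$$F(s):=\zeta(s)\,L(s,\chi_1)\,L(s,\chi_2)\,L(s,\chi_1\chi_2)$$
is entire except for a simple pole at $s=1$. (Concretely $F=\zeta_E$ for the biquadratic field $E=\Q(\sqrt{D_1},\sqrt{D_2})$.)

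I would then use two standard facts about $F$. First, $F$ has non-negative Dirichlet coefficients: expanding the Euler product, $\log F(s)=\sum_{p,\,k\geq 1}\frac{1}{kp^{ks}}\bigl(1+\chi_1(p)^k\bigr)\bigl(1+\chi_2(p)^k\bigr)$ with each factor $\geq 0$, so in particular $-\frac{F'}{F}(\sigma)=\sum_n\Lambda_F(n)n^{-\sigma}\geq 0$ for real $\sigma>1$. Second, applying the Hadamard product (partial-fraction) expansion to each of the four factors, taking real parts, and absorbing the archimedean gamma-factors and the constants $B(\chi)$ into an error term controlled by the conductors $|D_1|,|D_2|,|D_1D_2|$, one obtains, uniformly for $1<\sigma<2$,
$$0\;\leq\;-\frac{F'}{F}(\sigma)\;=\;\frac{1}{\sigma-1}\;-\;\sum_{\rho}\mathrm{Re}\,\frac{1}{\sigma-\rho}\;+\;O\bigl(\log|D_1D_2|\bigr),$$
where $\rho$ runs over the nontrivial zeros of $F$ with multiplicity and, crucially, each term satisfies $\mathrm{Re}\,\frac{1}{\sigma-\rho}=\frac{\sigma-\mathrm{Re}\,\rho}{|\sigma-\rho|^2}\geq 0$ since $\mathrm{Re}\,\rho\leq 1<\sigma$. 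Now $\beta_1$ and $\beta_2$ occur among the $\rho$ (as zeros of the factors $L(s,\chi_1)$ and $L(s,\chi_2)$; if $\beta_1=\beta_2$ then $F$ has a zero of order $\geq 2$ there, which only helps), so discarding the remaining non-negative terms gives
$$0\;\leq\;\frac{1}{\sigma-1}\;-\;\frac{1}{\sigma-\beta_1}\;-\;\frac{1}{\sigma-\beta_2}\;+\;O\bigl(\log|D_1D_2|\bigr).$$
Set $\mathcal L=\log|D_1D_2|$ and suppose, for contradiction, that both $\beta_i\geq 1-c/\mathcal L$. Choosing $\sigma=1+2c/\mathcal L$ (which lies in $(1,2)$ once $c$ is small) yields $\frac{1}{\sigma-1}=\frac{\mathcal L}{2c}$ and $\frac{1}{\sigma-\beta_i}\geq\frac{\mathcal L}{3c}$, so the inequality becomes $0\leq\mathcal L\bigl(\frac{1}{2c}-\frac{2}{3c}+C\bigr)=\mathcal L\bigl(-\frac{1}{6c}+C\bigr)$ for the absolute implied constant $C$; taking $c<\frac{1}{6C}$ is a contradiction. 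Hence $\min(\beta_1,\beta_2)<1-\frac{c}{\log|D_1D_2|}$ with $c$ absolute.

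The argument is classical, so I do not expect a serious obstacle; the one point requiring care is the uniformity of the $O(\log|D_1D_2|)$ term in the logarithmic-derivative formula — in particular handling $\chi_1\chi_2$ through its primitive inducing character and bounding its contribution by its conductor (which divides $|D_1D_2|$) — but this is routine. The coincident case $\beta_1=\beta_2$, and the degenerate case of very small discriminants (where one instead uses directly that there are only finitely many quadratic fields below any bound, none of whose $L$-functions has a real zero within a fixed distance of $1$), both cause no difficulty.
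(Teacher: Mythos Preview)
Your argument is correct: this is precisely Landau's classical repulsion argument via the non-negativity of the Dirichlet coefficients of $\zeta(s)L(s,\chi_1)L(s,\chi_2)L(s,\chi_1\chi_2)$ combined with the explicit formula for its logarithmic derivative. The paper does not give its own proof of this theorem but simply cites Theorem~5.27 of Iwaniec--Kowalski, whose proof is exactly the one you have reproduced.
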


The proof of the Theorem \ref{repulsion} can be found in Theorem 5.27 of \cite{IK}. Chapter 5 of \cite{IK} is also great introduction to Siegel zeroes
and the analytic theory of L-functions in general.

\section{Producing Weyl CM Fields}

In \cite{KY}, the Andr\'e-Oort conjecture(\ref{AO}) was proven under the assumption of GRH. The reason for their assuming of GRH was to guarantee that certain 
CM fields have many small split primes. As such, our first task is to produce a sequence of Weyl CM fields of fixed degree $g$ containing many small split primes. This is a problem in algebraic number theory. We use methods coming from looking at zeta functions of families of curves over finite fields. It is possible
that one could also accomplish the same task by looking at certain `GRH on average' results, though we have not carried this out. One advantage
of our approach is that we immediately produce CM fields, without having to filter them out. 
In the next section, we follow the methods of \cite{KY} and prove the desired closure property (\ref{AO}) about Zariski Closures for our sequence unconditionally.

We fix an integer $g>1$ and pick a prime number $q>g$, which shall remain fixed for the rest of the section.

In \cite{C}, N.Chavdarov studies the 
following situation : 

 Consider a family of proper, smooth curves
of genus $g$, $\psi: C\rightarrow U$ where $U$ is a smooth affine curve over $\F_q$. Assume that for $l\neq 2,q$ the mod-$l$ monodromy of $R^1\psi_{!}\Z_l$ is the 
full symplectic group $Sp_{2g}(\F_l)$. Such a family can be constructed by taking the family of curves $$\{y^2=(x-t)\prod_{i=1}^{2g}(x-i)\}$$ parametrized
by $t\in \A^1_{\F_q}$, as was proven by Yu (unpublished). The result was also reproven and generalized by Hall in \cite{Ha}.
Fix a symplectic pairing $\langle\, ,\rangle$ and define $$SSp_{2g}(\F_{l}) = \{A\in M_{2g}(\F_l) \mid \langle Av, Aw\rangle = \gamma\langle v,w\rangle\textrm{ for some } \gamma\in\F_l^{\times}\}.$$
We shall use heavily the following Theorem from \cite{C}, where it is attributed to Nick Katz:

\begin{thm}(\cite{C}, Thm 4.1)\label{averagefrob}
With notation as above, let $l_1,l_2,\dots l_r$ be a distinct set of primes not equal to $2$ or $q$. Set 
$G_0=\prod_{i=1}^r Sp_{2g}(\F_{l_i}), G=\prod_{i=1}^r SSp_{2g}(\F_{l_i})$. Then we have the following commutative diagram, where the rows are exact:

$$\xymatrix{
1\ar[r]&\pi_1^{\textrm{geom}}\ar[d]^{\lambda_0}\ar[r]&\pi_1\ar[d]^{\lambda}\ar[r]^{\deg}&\hat{\Z}\ar[d]^{1\rightarrow\gamma^{-1}}\ar[r]&1\\
1\ar[r]&G_0\ar[r]&G\ar[r]^{\mult}&\Gamma\ar[r]&1}$$

For each conjugacy class $C$ of $G$ we have

$$\left|\textrm{Prob}\{u\in U(\F_{q^n})\mid Frob_{u}\in C\} - \frac{|C\cap\mult^{-1}(\gamma^n)|}{|G_0|}\right|\ll_{\psi} |G_0|q^{-n/2}.$$

\end{thm}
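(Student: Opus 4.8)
The plan is to reduce the statement to the Lang--Weil estimate applied fiberwise, after separating the geometric and arithmetic contributions via the displayed commutative diagram. First I would recall the setup: for $u \in U(\F_{q^n})$, the Frobenius conjugacy class $\mathrm{Frob}_u$ lives in $\pi_1$, and its image under $\deg$ is $n \in \hat{\Z}$, so that $\lambda(\mathrm{Frob}_u)$ lands in $\mathrm{mult}^{-1}(\gamma^n) \subset G$. The hypothesis that the geometric monodromy is the full symplectic group $Sp_{2g}(\F_l)$ for each $l = l_i$ means $\lambda_0$ is surjective onto $G_0 = \prod_i Sp_{2g}(\F_{l_i})$; combined with surjectivity of $\deg$ this forces $\lambda$ to be surjective onto $G$. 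The key geometric input is then a standard Lang--Weil / Chebotarev-over-finite-fields argument: for the finite \'etale cover of $U$ corresponding to the open subgroup $\ker \lambda \subset \pi_1$, with deck group $G$, the number of points $u \in U(\F_{q^n})$ with $\mathrm{Frob}_u$ in a prescribed conjugacy class $C$ is governed by the trace of Frobenius on the corresponding piece, yielding a main term proportional to $|C \cap \mathrm{mult}^{-1}(\gamma^n)|$ and an error controlled by the number of $\F_{q^n}$-points on the cover.

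The main steps, in order, are: (i) construct the connected \'etale $G$-cover $V \to U$ attached to $\lambda$, noting $V$ is geometrically connected precisely because $\lambda_0$ is surjective; (ii) partition $V(\F_{q^n})$ according to which $G$-coset the Frobenius lies in, equivalently observe that $V(\F_{q^n}) = \coprod_{u} \{$lifts of $u$ fixed by $\mathrm{Frob}\}$ and that the lifts of a given $u$ form a torsor under $G_0$ permuted by the action twisted by $\gamma^n$; (iii) apply the Lang--Weil estimate $|V(\F_{q^n})| = q^n + O(q^{n/2})$ — or more precisely the Grothendieck--Lefschetz trace formula with Deligne's Riemann Hypothesis bounds on the eigenvalues of Frobenius on $H^1_c$ — where the implied constant depends on the Betti numbers of $V$, hence on $\psi$ and on $|G_0|$; (iv) sum over the twisted conjugacy classes and divide through by $|U(\F_{q^n})| = q^n + O(q^{n/2})$ to extract the claimed probability and the error term $\ll_\psi |G_0| q^{-n/2}$, where the factor $|G_0|$ arises from summing the error over the $G_0$-many sheets and bounding $|U(\F_{q^n})|^{-1}$.

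The delicate point — and the reason Katz's name is attached — is controlling the dependence of the error on $|G_0|$ uniformly as the set of primes $l_1,\dots,l_r$ varies. A naive application of Lang--Weil to $V$ gives an implied constant depending on the geometry of $V$, which itself grows with $|G_0|$; the content of the theorem is that this growth is no worse than linear in $|G_0|$. I expect the main obstacle to be precisely this uniformity: one must bound the sum of the Betti numbers (or the number of irreducible components and their degrees) of the compactified cover $\overline{V}$ in terms of $|G_0|$ and the fixed geometric data of $\psi$, which requires knowing that the cover $V \to U$ is tamely ramified with bounded ramification — a consequence of working away from $q$ and of the tame behavior of $Sp_{2g}(\F_l)$-covers — and then invoking an explicit form of the Lefschetz trace formula. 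Everything else is bookkeeping with the exact sequences; the arithmetic twist by $\gamma^n$ is handled formally by restricting attention to the coset $\mathrm{mult}^{-1}(\gamma^n)$ throughout.
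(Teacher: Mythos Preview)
The paper does not give its own proof of this statement: Theorem~\ref{averagefrob} is quoted verbatim from \cite{C}, Theorem~4.1 (where it is attributed to Katz), and is used only as a black box. There is therefore no proof in the paper to compare your proposal against.

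For what it is worth, your outline is the standard one and is essentially what appears in \cite{C}: pass to the finite \'etale Galois cover $V\to U$ attached to $\ker\lambda$, apply the Grothendieck--Lefschetz trace formula together with Deligne's weight bounds on $H^1_c$, and read off the main term and error. You have also correctly located the only nontrivial point, namely the uniform linear dependence of the error on $|G_0|$. One small correction: the mechanism is not tameness of the cover but rather that the Swan conductors at the punctures of $U$ are controlled by the fixed local monodromy of the family $\psi$, independently of the $l_i$; feeding this into the Euler--Poincar\'e (Grothendieck--Ogg--Shafarevich) formula gives $\dim H^1_c(V,\overline{\Q}_\ell) \ll_\psi |G_0|$, which is exactly the bound you need.
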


In the above theorem the notation $\ll_{\psi} |G_0|q^{-n/2}$ means there exists some constant $c(\psi)>0$ depending only on the family $\psi$ such that
the left hand side is at most $c(\psi) |G_0|q^{-n/2}$. It is critical for us to have the uniform dependence on $G_0$ as the group itself varies.

 For each $u\in U(\F_{q^n})$ we consider the numerator $P_u(T)$ of the zeta function of $C_u$. Theorem 2.3 of \cite{C}
says that $P_u(T)$ is irreducible for a density 1 subset of $U(\overline{\F_{q}})$, where the density of a set $S$ is defined by 

$$\textrm{lim}_{n\rightarrow\infty} \frac{|S\cap U(\F_{q^n})|}{|U(\F_{q^n})|}.$$	

Moreover, the field $\K_u=\Q(\pi_u)$ is a Weyl CM field for a subset of density 1, where $\pi_u$ is a root of $P_u(T)$. We remind the reader that by the Weil 
Conjectures for curves, all conjugates of $\pi_u$ have absolute value $q^{n/2}$. We shall use the fact that how a prime $l\neq q$ factors in $\K_u$ can be read off from 
the image in $SSp_{2g}(\F_l)$ of $Frob_{u}$.  

The idea of the proof is that a conjugacy class mod $l$ tells us how $P_u(T)$ reduces mod $l$. It is proven in \cite{C} that by fixing a finite set of primes
$m_1,m_2,\dots,m_h$ and conjugacy classes $C_i$ in $SSp_{2g}(\F_{m_i})$ one can force $P_u(T)$ to be irreducible and for the associated field to be a Weyl
CM field.

We will now use Theorem \ref{averagefrob} to construct Weyl CM fields $\K_u$ with many small split primes. 
Throughout the rest of this section $n$ will be an integer
parameter that will be tending to infinity, and we shall be picking primes $l_i$ to depend on $n$. First, note that since the ring of integers $O_{\K_u}$ 
contains $\Z[\pi_u]$ as a 
subring of finite index, we have $Disc(\K_u)\leq Disc(\Z[\pi_u])\ll q^{ng^2}$, where the last inequality follows from the fact that all conjugates of $\pi_u$ 
have absolute value $q^{n/2}$. Fix a prime $l$ such that $n^5 < l < 2n^5$. Applying Theorem \ref{averagefrob} to this prime, we see that it splits completely
in $|U(\F_{q^n})|(\frac{1}{2^g g!}+ o_n(1))$ fields $\K_u$. Since this is true for each prime $l$, we see that on average, each field $\K_u$ has 
$$\frac{n^5}{2^g g!\log(n^5)}\cdot(1+o_n(1))$$ primes between $n^5$ and $2n^5$ split completely (Note that since most fields are Weyl CM fields, this is what is expected from 
Chebatorev's density theorem). In particular, there exists at least one $CM$ field $\K_u$ with at least $\frac{n^5}{2^{g+1} g!\log(n^5)}$ primes between $n^5$ 
and $2n^5$ that split completely in $\K_u$. By varying over $n$, we can thus create an infinite such sequence.

We're almost done, but there's still an issue to deal with: We have produced a sequence of Weyl CM fields with lots of split primes, but for these primes to be 
`small' compared to the 
discriminant, we need to ensure that the discriminant of $\K_u$ is large. To accomplish this, recall that 
 a prime $l$ will divide $Disc(\K_u)$ iff $l$ ramifies in $\K_u$, which is to say that $Frob_{u}$ maps to an element of
$SSp_{2g}(\F_l)$ having a repeated root. Pick a finite set of primes $l_1,l_2,\dots,l_r$ such that $l_1l_2l_3\dots l_r$ is on the order of $q^{\frac{n}{32g^2}}.$ 
Using Theorem \ref{averagefrob} for this set of primes and a conjugacy class of $SSp_{2g}(\F_{l_i})$ with repeated roots produces infinitely many CM fields 
$K_n$ which have discriminant $D_n$ divisible by each $l_i$, and therefore satisfying 

$$c_1 q^{\frac{n}{32g^2}}\leq D_n\leq c_2 q^{ng^2}.$$ 

We can now prove the main result of this section:

\begin{lemma}\label{goodweylfields}
For each $g$ there exists a sequence of distinct Weyl CM fields $K_i$ with discriminant $D_i$ satisfying the following properties:
\begin{enumerate}

\item There exists a constant $c_g$ such that at least $c_g\frac{\log(D_i)^5}{\log(\log(D_i))}$ primes $p\leq 2\log(D_i)^5$ split completely in $K_i$.
\item For each number field $L$, the Galois closure of $K_i$ does not contain $L$ for $i\gg_L 0$.
\item There exist $c_1,c_2$ such that $c_1 q^{\frac{n}{32g^2}}\leq D_n\leq c_2 q^{ng^2}.$

\end{enumerate}
\end{lemma}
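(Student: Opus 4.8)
The plan is to merge the two constructions sketched above — producing ``many small split primes'' and forcing a ``large, controlled discriminant'' — into a single application of Theorem~\ref{averagefrob}, and then extract one field from the resulting averaged statement by a pigeonhole step. Fix $g>1$ and a prime $q>g$, and work throughout with the Chavdarov family $\psi\colon C\to U$ attached to $y^2=(x-t)\prod_{i=1}^{2g}(x-i)$, so that for each $n$ the roots $\pi_u$ of the zeta numerators $P_u(T)$ are $q^n$-Weil numbers, $\K_u=\Q(\pi_u)$ is a Weyl CM field for a density-$1$ set of $u\in U(\F_{q^n})$, and for $\ell\ne q$ the way $\ell$ factors in $\K_u$ is read off from the image of $\mathrm{Frob}_u$ in $SSp_{2g}(\F_\ell)$. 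All auxiliary primes below may depend on the parameter $n\to\infty$.

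First I would fix the ``structural'' data for each $n$: the primes $m_1,\dots,m_h$ of \cite{C} together with conjugacy classes forcing $P_u$ irreducible and $\K_u$ Weyl; primes $\ell_1,\dots,\ell_r$ with $\ell_1\cdots\ell_r$ of order $q^{n/32g^2}$, each paired with the conjugation-invariant set of elements of $SSp_{2g}(\F_{\ell_j})$ having a repeated eigenvalue, which forces $\ell_j\mid D_{\K_u}$; and, for the slowly growing bound $B_n=\log n$, the condition at every prime $p<B_n$ with $p\notin\{2,q\}$ that $\mathrm{Frob}_u\bmod p$ have separable characteristic polynomial, which forces $p\nmid D_{\K_u}$ (for the two residual primes: the generic member of the family is ordinary, so $q\nmid D_{\K_u}$, and $P_u$ is separable mod $2$ for a density-$1$ set of $u$, so $2\nmid D_{\K_u}$; thus $D_{\K_u}$ ends up coprime to every prime below $B_n$). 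Since $|Sp_{2g}(\F_p)|\le p^{3g^2}$ and the product of all these primes is $q^{n/32g^2}\cdot\mathrm{poly}(n)$, the group $G_0$ of Theorem~\ref{averagefrob} for this set has $|G_0|\le q^{3n/32}\cdot\mathrm{poly}(n)$, so the error term $|G_0|q^{-n/2}\le q^{-13n/32}\cdot\mathrm{poly}(n)$ tends to $0$; hence the set $\mathcal G_n\subset U(\F_{q^n})$ of $u$ meeting all the structural conditions has cardinality $\mu_n|U(\F_{q^n})|(1+o(1))$ with $\mu_n>0$ and $\mu_n$ large compared with the error.

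For property~(1), put $m=\lfloor\frac{n\log q}{64g^2}\rfloor$, so that (using $D_{\K_u}\ge c_1q^{n/32g^2}$ for $u\in\mathcal G_n$) one has $m\le\log D_{\K_u}$ for $n$ large, and also $m\asymp_g\log D_{\K_u}$ since $\log D_{\K_u}\le\log(c_2q^{ng^2})\ll_g n$. For each prime $\ell$ in the window $(m^5,2m^5)$, apply Theorem~\ref{averagefrob} once more to the structural prime set enlarged by $\ell$, additionally demanding that $\mathrm{Frob}_u\bmod\ell$ be regular semisimple with all eigenvalues in $\F_\ell$, so that $\ell$ splits completely in $\K_u$; such elements make up a proportion $\tfrac1{2^gg!}+O(1/\ell)$ of each multiplier fibre of $SSp_{2g}(\F_\ell)$, and $\ell\le 2m^5=\mathrm{poly}(n)$ keeps the error negligible, so the density of $u\in\mathcal G_n$ for which $\ell$ splits completely in $\K_u$ is at least $\mu_n/(2^{g+1}g!)$ for $n$ large, uniformly in $\ell$. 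Summing over the $\gg m^5/\log m^5$ primes of the window and dividing by $|\mathcal G_n|\asymp\mu_n|U(\F_{q^n})|$, the factors $\mu_n$ cancel and some $u^\ast\in\mathcal G_n$ has at least $\gg_g m^5/\log m^5$ primes $\ell\le 2m^5\le 2(\log D_{\K_{u^\ast}})^5$ splitting completely in $K_i:=\K_{u^\ast}$; since $m\asymp_g\log D_{K_i}$ this is property~(1) with a constant $c_g$ depending only on $g$. Property~(3) is then immediate from $\ell_1\cdots\ell_r\mid D_i$ (lower bound) and $D_i\le\mathrm{disc}\,\Z[\pi_{u^\ast}]\ll q^{ng^2}$ (upper bound), and since $D_i\to\infty$ we may pass to a subsequence on which the $D_i$, hence the fields $K_i$, are pairwise distinct.

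Finally, property~(2): if a number field $L\ne\Q$ embeds in the Galois closure $\widetilde K_i$ then every prime ramifying in $L$ ramifies in $\widetilde K_i$, hence in $K_i$, hence divides $D_i$; but $D_i$ is coprime to all primes below $B_n=\log n$, while by Minkowski $L$ is ramified at some fixed prime $p_0$, forcing $p_0\ge\log n$, which leaves only finitely many $n$. The step I expect to require the most care is the bookkeeping in the middle two paragraphs: one must keep the ramification primes small enough ($\ell_1\cdots\ell_r\le q^{O(n/g^2)}$) that $|G_0|q^{-n/2}\to 0$ survives the dimension-$3g^2$ blow-up of $|G_0|$, yet calibrate the window of split primes to the \emph{guaranteed} lower bound $q^{n/32g^2}$ for $D_i$ rather than to the a priori upper bound $q^{ng^2}$ — this is what makes the primes produced genuinely small relative to $D_i$ — all while running the equidistribution theorem one prime at a time and still obtaining a single $u^\ast$ valid for the whole window. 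The group-theoretic inputs (the proportion of regular split, resp.\ separable, elements in $Sp_{2g}(\F_\ell)$, and the dictionary between splitting type and Frobenius conjugacy class) are standard and already used in \cite{C}, so I would quote rather than reprove them.
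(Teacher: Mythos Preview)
Your argument for parts~(1) and~(3) is essentially the paper's: fix the Chavdarov primes $m_i$ forcing the Weyl condition, add ramification primes $\ell_j$ with product $\asymp q^{n/32g^2}$, run Theorem~\ref{averagefrob}, then average over an auxiliary window of split primes and pigeonhole out a single $u^\ast$. Your calibration via $m=\lfloor n\log q/64g^2\rfloor$ is in fact a little tidier than the paper's direct use of the window $(n^5,2n^5)$, since it guarantees the split primes really lie below $2(\log D_i)^5$ rather than below a constant multiple of it.

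The genuine difference is in how you obtain~(2). The paper argues diagonally: it enumerates all number fields $L_1,L_2,\dots$, picks for each a prime $p_j$ totally inert in $L_j$, and then, as $n$ grows, adds to the structural data the condition that $\mathrm{Frob}_u$ be in the totally-split class at $p_1,\dots,p_{k(n)}$ for a slowly growing $k(n)$. If $L_j$ sat inside $\widetilde K_n$ then $p_j$ split in $\widetilde K_n$ would force $p_j$ split in $L_j$, contradicting inertness. Your route instead forces $D_i$ to be coprime to every prime below $\log n$ and then invokes Minkowski: any $L\neq\Q$ has a fixed ramified prime $p_0$, and $L\subset\widetilde K_i$ would make $p_0\mid D_i$, impossible once $\log n>p_0$. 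This is slick and avoids the enumeration, but it has a cost the paper's method does not: you must control behaviour at the two primes $2$ and $q$ that Theorem~\ref{averagefrob} explicitly excludes. Your parenthetical handling of these---``the generic member is ordinary, so $q\nmid D_{\K_u}$'' and ``$P_u$ is separable mod~$2$ for a density-$1$ set of $u$''---is where the proposal is soft. The ordinariness claim is correct in spirit (for ordinary $A/\F_{q^n}$ the prime $q$ splits completely in the CM field, hence is unramified), but ordinariness is a condition at $q$ that Theorem~\ref{averagefrob} does not see, so you must argue separately that intersecting $\mathcal G_n$ with the ordinary locus does not spoil the pigeonhole count; likewise the mod-$2$ separability claim cannot be read off Theorem~\ref{averagefrob} and would need an independent monodromy input (e.g.\ from~\cite{Ha}). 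The paper's diagonal argument sidesteps all of this because the extra primes $p_j$ are chosen away from $\{2,q\}$, so every added condition stays inside the range where Theorem~\ref{averagefrob} applies. If you want to keep your ramification approach, the cleanest fix is to drop the attempt to control $2$ and $q$ and instead observe that the fields $L$ of degree at most $2^g g!$ ramified only at $\{2,q\}$ form a finite set (Hermite), and handle those finitely many $L$ by the paper's split/inert trick.
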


\begin{proof}

We build the $K_n$ in a few steps. First, we pick a finite set of primes $m_1,m_2,\dots m_h$ and conjugacy classes $C_i$ in the corresponding groups 
We build the $K_n$ in a few steps. First, we pick a finite set of primes $m_1,m_2,\dots m_h$ and conjugacy classes $C_i$ in the corresponding groups 
$SSp_{2g}(\F_{m_i})$ such that $mult(C_i)=\gamma^n$ and for any $u$ with $\lambda_0(Frob_u)\in C_i$, the polynomial $P_u(T)$ is irreducible and 
$\K_u$ is a Weyl CM field. Next, pick for each $n$ primes $l_1,l_2,..\dots l_{r_n}$ distinct from the $m_i$ whose product is asymptotic to 
$q^{\frac{n}{32g^2}}$ 
as $n\rightarrow\infty$ (note that this is easy to do by the prime number theorem). 
Next, we pick conjugacy classes $D_i$ in $SSp_{2g}(\F_{l_i})$ whose characteristic polynomials have repeated roots and such that $mult(D_i)=\gamma^n$.  Finally,
we pick an auxiliary prime $l$ such that $n^5<l<2n^5$ and let $E_l$ denote the union of all conjugacy classes in $SSp_{2g}(\F_l)$ such that $mult(E_l)=\gamma^n$ 
and also the characteristic polynomials of all elements $E_l$ split completely over $\F_l$. We now apply Theorem \ref{averagefrob} to
the primes $m_i,l_j$ with the conjugacy class $C=\prod_{i=1}^h C_i\times\prod_{j=1}^{r_n}D_j$. In the notation of Theorem \ref{averagefrob}, we have
$G=\prod_{i=1}^h SSp_{2g}(\F_{m_i})\times\prod_{j=1}^{r_n}SSp_{2g}(\F_{l_j})$, 
$G_0=\prod_{i=1}^h Sp_{2g}(\F_{m_i})\times\prod_{j=1}^{r_n}Sp_{2g}(\F_{l_j})$ and 

\begin{align*} 
\textrm{Prob}\{u\in U(\F_{q^n})\mid Frob_{u}\in C\}&=\frac{|C\cap G^{\mult\gamma^n}|}{|G_0|} + O(|G_0|q^{-n/2})\\
&=\frac{|C\cap G^{\mult\gamma^n}|}{|G_0|}  + O(q^{-3n/8})\\
\end{align*}

As $|G_0|\ll q^{n/4}$ and $U(\F_{q^n})\asymp q^n$, we see that we have at least $$\frac{U(\F_{q^n})\times |C\cap G^{\mult\gamma^n}|}{|G_0|} + O(q^{5n/8})$$
 points $u$ such that
$\Q(\pi_u)$ is a Weyl CM field $K_u$ with discriminant $$q^\frac{n}{32g^2}\asymp\prod_{j=1}^{r_n} l_j \ll \Disc(K_u)\ll q^{ng^2}$$ so that (3) holds.

Note that different points $u$
could produce the same field $K_u$ so we count the $K_u$ with multiplicity. 
Now, we apply a similar calculation to the primes $m_i,l_j,j$, where now we take the conjugacy class $$C=\prod_{i=1}^h C_i\times\prod_{j=1}^{r_n}D_j\times E_l.$$
shows that of these fields we have the prime $l$ splits 
completely in $$\frac{|E_l|}{|Sp_{2g}(\F_{l})|}\times\frac{U(\F_{q^n})\times |C\cap G^{\mult\gamma^n}|}{|G_0|} + O(|SSP_{2g}(\F_l)| q^{5n/8})$$ of them. 
By (\cite{C}, Theorem 3.5)  it follows
that $\frac{|E_l|}{|Sp_{2g}(\F_{l})|}\longrightarrow \frac{1}{2^g\times g!}$. Averaging over $l$ between $n^5$ and $2n^5$ we see that at
least one of the $K_u$ satisfies condition (1). For condition (2), enumerate all number fields $L_1,L_2,\dots,L_n,\dots$ and pick a totally inert prime $p_i$ in 
each. We can then repeat the above construction of the $K_i$, insisting that $K_n$ is eventually totally split at each of $p_1,p_2,\dots p_m,\dots$ by picking 
appropriate conjugacy classes. This will ensure that (2) holds.

\end{proof}

In order to produce primes later on, we shall need a subsequence of the $K_i$ that has no exceptional zeroes. 

\begin{lemma}\label{greatfields}

There is an infinite subsequence $W_j$ of the $K_i$ such that for $V_j$ the Galois closure of $W_j$, $\zeta_{V_j}(s)$ has no exceptional zero.

\end{lemma}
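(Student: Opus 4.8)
The plan is to extract the subsequence $W_j$ by a greedy/inductive argument, using the repulsion phenomenon for Siegel zeroes (Theorem \ref{repulsion}) together with Heilbronn's reduction (Theorem \ref{quadsubfield}) to control which fields in the sequence $K_i$ can have an exceptional zero. First I would note that, by Theorem \ref{quadsubfield}, if the Galois closure $V_i$ of $K_i$ has an exceptional zero $\beta_i$, then there is a quadratic subfield $F_i \subset V_i$ with $\zeta_{F_i}(\beta_i) = 0$, and $\beta_i$ lies in the range $1 - \frac{C_d}{\log D_{V_i}} \le \beta_i < 1$, where $d = |W_g|$ is the (fixed) degree of the Galois closure; note $\log D_{V_i} \asymp \log D_i$ since $V_i/K_i$ has bounded degree, so $D_{F_i} \le D_{V_i}$ is polynomially bounded in $D_i$.

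The key step is the following dichotomy, applied inductively. Having chosen $W_1, \dots, W_{j}$ all of whose Galois closures have no exceptional zero, I want to choose $W_{j+1}$ from among the remaining $K_i$ with $i$ large. Suppose infinitely many $K_i$ had an exceptional zero; for each such $i$ pick the associated quadratic field $F_i$. If infinitely many of the $F_i$ coincide with a single fixed quadratic field $F$, then $\zeta_F(s)$ would have a real zero $\beta_i \to 1$ (since $\beta_i \ge 1 - C_d/\log D_{V_i}$ and $D_{V_i} \to \infty$ by property (3) of Lemma \ref{goodweylfields}), which is impossible as $\zeta_F$ has only finitely many zeroes near $s=1$ — more simply, a single $\zeta_F$ has at most one real zero in $(1 - c/\log D_F, 1)$, a fixed interval, so it cannot be hit by $\beta_i$ arbitrarily close to $1$. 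Hence the $F_i$ are eventually distinct, and I can find two of them, say $F_{i_1}, F_{i_2}$ with $i_1, i_2$ both large, that are distinct quadratic fields. But then Theorem \ref{repulsion} gives $\min(\beta_{i_1}, \beta_{i_2}) < 1 - \frac{c}{\log(D_{F_{i_1}} D_{F_{i_2}})}$, while both satisfy $\beta_{i_k} \ge 1 - \frac{C_d}{\log D_{V_{i_k}}}$; since $\log D_{F_{i_k}} \le \log D_{V_{i_k}} \ll \log D_{i_k}$, comparing these forces $\frac{c}{\log(D_{i_1} D_{i_2})} \lesssim \frac{C_d}{\log D_{i_k}}$ for one of $k=1,2$, i.e. $\log D_{i_k} \lesssim \log D_{i_{3-k}}$ up to constants — which is fine, but iterating the argument over three or more distinct $F_i$ with comparable discriminants yields a contradiction once we have enough of them with discriminants in a bounded ratio. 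Cleaner still: since property (3) shows $\log D_i$ grows, I partition the (hypothetically infinitely many) bad $K_i$ into blocks where $\log D_i$ lies in $[2^m, 2^{m+1}]$; within one block, any two distinct $F_i$ violate repulsion as above, so each block contributes at most one bad $F_i$, hence at most one bad $K_i$ per quadratic field per block — and since the $F_i$ all have discriminant bounded by a fixed power of $2^{m+1}$ there are boundedly many of them, contradicting that the block is an infinite tail. Thus all but finitely many $K_i$ have no exceptional zero in their Galois closure, and taking $W_j$ to be this cofinite subsequence finishes the proof.

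The main obstacle is bookkeeping the quantitative comparison between the lower bound $\beta_i \ge 1 - C_d/\log D_{V_i}$ coming from the definition of exceptional zero and the upper bound from repulsion, and in particular arranging that the two fields to which repulsion is applied have discriminants of \emph{comparable} size so that the two bounds are genuinely in conflict — this is exactly what the dyadic-block decomposition of $\{\log D_i\}$ accomplishes, using property (3) of Lemma \ref{goodweylfields} to guarantee $\log D_i \to \infty$ and hence that infinitely many bad indices would force infinitely many per block for some $m$, which is absurd since only finitely many quadratic fields have discriminant below a fixed bound. One should also double-check that the degree $d$ of the Galois closure is genuinely fixed (it is $|W_g| = 2^g g!$, independent of $i$), so that $C_d$ is a single constant throughout; this is where the hypothesis that the $K_i$ are Weyl CM fields of fixed degree $2g$ is used.
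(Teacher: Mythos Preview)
Your overall strategy---reduce via Heilbronn's theorem to quadratic subfields $F_i$, then invoke repulsion between two such fields, choosing the constant $C_d$ small enough at the end---matches the paper's, and your treatment of the case where infinitely many $F_i$ coincide is fine. However, the closing dyadic-block pigeonhole step is wrong as written. You claim that ``infinitely many bad indices would force infinitely many per block for some $m$'', but property~(3) gives $\log D_i \to \infty$, so each dyadic block $[2^m,2^{m+1}]$ contains only \emph{finitely} many of the $\log D_i$; pigeonhole across infinitely many finite blocks yields nothing. Knowing that each block carries at most one bad quadratic field $F$ does not by itself contradict the existence of infinitely many bad $K_i$, since there are infinitely many blocks available.

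What is actually needed, and what the paper does, is to exploit property~(3) more sharply. Since $c_1 q^{n/(32g^2)} \le D_n \le c_2 q^{ng^2}$, the values $\log D_n$ are spaced so that every interval of the form $[T, 32g^4 T]$ (for $T$ large) contains some $\log D_n$. Starting from one bad $K_i$ with exceptional zero $\beta_i$, the paper sets $T = C_{2^g g!}/(1-\beta_i) > \log D_i$ and picks $K_j$ with $\log D_j \in [T, 32g^4 T]$; this guarantees both that $D_j > D_i$ (so $K_j$ is also bad by the standing assumption that \emph{all} large $K_i$ are bad, yielding $\beta_j > 1 - C_{2^g g!}/\log D_j > \beta_i$ and a quadratic $F_j$) and that $\beta_i,\beta_j$ both lie in $(1 - 32g^4 C_{2^g g!}/\log D_j,\,1)$. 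Repulsion applied to $F_i,F_j$ then gives $\min(\beta_i,\beta_j) < 1 - c/(2^g g!\log D_j)$, and taking $C_{2^g g!}$ small relative to $c$ produces the contradiction. The key point you missed is that negating the lemma gives not merely ``infinitely many $K_i$ are bad'' but that \emph{all} sufficiently large $K_i$ are bad, which is what allows one to \emph{choose} a second bad field with discriminant in a prescribed range rather than hoping pigeonhole supplies it. Your dyadic scheme could be repaired along these lines---once all large $K_i$ are bad, property~(3) ensures every large block is nonempty and hence bad, so adjacent blocks furnish the comparable pair---but as stated it does not close.
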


\begin{proof}

Assume not, so that there is some real number $r$ such that for $D_i\geq r $ the Dedekind zeta function 
$\zeta_{L_i}(s)$ has an exceptional zero, where $L_i$ is the Galois closure of $K_i$. By Theorem \ref{quadsubfield} this implies that there is a quadratic
subfield $F_i\subset L_i$ such that $\zeta_{F_i}(s)$ has a zero $\beta_i$ such that $$1-\frac{C_{2^g\cdot g!}}{\log{D_i}}<\beta_i<1.$$ By (3) of 
lemma \ref{goodweylfields} there is some $K_j$ with $D_j>D_i>r$ such that $$1-\frac{32g^4\cdot C_{2^g\cdot g!}}{\log{D_j}}<\beta_i<1-\frac{C_{2^g\cdot g!}}{\log{D_j}}.$$

Hence there is some quadratic field $F_j\subset L_j$ such that $\zeta_{F_j}(s)$ has a zero $\beta_j$ with $$1-\frac{C_{2^g\cdot g!}}{\log{D_j}}<\beta_j<1.$$
However, note that  $$\log\left(|\Disc(F_i)|\cdot |\Disc(F_j)|\right)\leq \log(|\Disc(L_j)|)\leq2^g\cdot g!\log(D_j).$$ Applying Theorem \ref{repulsion}
we arrive at $$1-\frac{32g^4 C_{2^g\cdot g!}}{\log{D_j}}<\min(\beta_1,\beta_2)<1-\frac{c}{\log\left(|\Disc(F_i)|\cdot |\Disc(F_j)|\right)}<
1-\frac{c}{2^g\cdot g!\log(D_j)}.$$

By taking $C_{2^g\cdot g!}< c $ we arrive at a contradiction, as desired.

\end{proof}

\section{Proof of Theorem \ref{isogeny}}

In this section we combine the arguments of \cite{CO} with our lemma \ref{goodweylfields} to prove Theorem \ref{isogeny}. First, we recall the following
bound of Yafaev:

\begin{lemma}(Yafaev)

Fix a Shimura variety $Sh(G,X)_K$ defined over a number field $F$. For any $\epsilon>0$ and $N>0$, there exist $c_1,c_2>0$ such that the following holds:
	
Let $s$ be a special point, with CM by a field $K$, in $Sh(G,X)$ . Let $K$ have discriminant $D_k$, and suppose there are at least 
$\epsilon\frac{\log(D_K)}{\log(\log(D_K))}$ primes $p < \frac{1}{\epsilon}(\log D_K)^5$  that split completely in $K$. Then

$$|\textrm{Gal}(\Qa/F)\cdot s|\geq c_1 \times\log D_K)^N \cdot \prod_{\substack{\textrm{p prime}\\ \textrm{MT(s)}_{/\F_p} \textrm{is not a torus}}}c_2p$$

Where $MT(s)$ denotes the Mumford-Tate group associated to $s$.
\end{lemma}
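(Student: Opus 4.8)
The plan is to bound the Galois orbit from below by a class number of the Mumford--Tate torus $T=\mathrm{MT}(s)$, exploiting the one structural feature of such a torus --- it is anisotropic modulo the centre of $G$, hence has finite unit group and no regulator --- and then to prevent the resulting analytic class number formula from collapsing by using the small split primes to control the relevant $L$-values. In outline this is exactly the argument of \cite{KY}; the content of the lemma is that the displayed hypothesis is precisely what \cite{KY} extract from GRH via effective Chebotarev, so that granting it makes the proof (with effective constants) unconditional.

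First I would pass to the torus. Writing $s=[(h,g)]$, setting $T=\mathrm{MT}(s)$ and $K_T=T(\A_f)\cap gKg^{-1}$, and letting $E$ be the reflex field of $s$, I would note that since $T$ splits over a field whose Galois group embeds in $GL_{\dim T}(\Z)$, Minkowski's theorem bounds $[E:\Q]$, and hence $[E:F]$, in terms of $G$ alone. The Shimura--Deligne reciprocity law for the canonical model exhibits $\Gal(\Qa/E)\cdot s$ as the image of the finite abelian group $h(T):=\bigl|T(\Q)\backslash T(\A_f)/K_T\bigr|$ (finite because $T(\mathbb{R})$ is compact modulo the centre) under a map with fibres of bounded size. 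Absorbing every bounded index into $c_1$, it remains to prove $h(T)\gg_{\epsilon,N}(\log D_K)^N\prod_{\substack{\textrm{p prime}\\ T_{/\F_p}\textrm{ not a torus}}}c_2\,p$.

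Next I would apply the class number formula for $T$. On one hand, $h(T)$ equals the class number of $T$ for its maximal compact subgroup, multiplied by $\prod_p[\,T(\Z_p)^{\max}:K_{T,p}\,]$; at a prime $p$ where $T_{/\F_p}$ is not a torus this index is $\gg p$, so this product already contributes $\prod_{\textrm{bad }p}c_2\,p$. On the other hand, writing $\zeta_T(s)=\prod_\chi L(s,\chi)^{m_\chi}$ with $\chi$ ranging over the Artin characters appearing in $X^*(T)\otimes\C$, the residue of $\zeta_T$ at $s=1$ governs the class number for the maximal compact, and --- crucially, since $T$ is anisotropic modulo the centre, so that no regulator and no infinite unit group intervene --- that class number is $\asymp\mathrm{cond}(T)^{1/2}\prod_{\chi\neq\chi_0}L(1,\chi)^{m_\chi}$ up to a bounded factor, with $\mathrm{cond}(T)\gg D_K^{\delta}$ for some $\delta=\delta(G)>0$ (the CM condition forces $T$ to see the ramification of $K$). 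Everything now reduces to a lower bound $\prod_{\chi\neq\chi_0}L(1,\chi)^{m_\chi}\gg D_K^{-\epsilon}$.

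The hard part --- and the only reason GRH is invoked in \cite{KY} --- is exactly this $L$-value estimate with an effective constant, equivalently the statement that no $L(s,\chi)$ has an exceptional zero too close to $1$. By Theorem \ref{quadsubfield} such a zero would have to come from $\zeta_F$ for a quadratic field $F\subseteq\widetilde K$, the Galois closure of $K$; since a prime that splits completely in $K$ splits completely in $\widetilde K$ and hence in $F$, the hypothesis supplies $\gg\log D_K/\log\log D_K$ primes $p\le\tfrac1\epsilon(\log D_K)^5$ with $\chi_F(p)=1$, and the effective lower bounds for $L(1,\chi)$ in terms of small primes at which $\chi$ is trivial --- the analytic input that \cite{KY} obtain in full from GRH --- then yield the desired estimate; as the number of $\chi$ occurring is bounded in terms of $G$, one gets $\prod_{\chi\neq\chi_0}L(1,\chi)^{m_\chi}\gg(\log D_K)^{-O_G(1)}\gg D_K^{-\epsilon}$, whence $h(T)\gg_\epsilon D_K^{\delta/2-\epsilon}\prod_{\textrm{bad }p}c_2\,p$, which dwarfs the asserted bound once $D_K$ exceeds a constant absorbed into $c_1$. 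The single thing one must actually verify is that this last estimate, together with the surrounding class number bookkeeping as performed in \cite{KY}, uses no arithmetic property of $K$ beyond the split-prime hypothesis printed above.
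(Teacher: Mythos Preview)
Your proposal is correct and in fact does considerably more than the paper. The paper's proof of this lemma consists of a single sentence: the result is Theorem~2.1 of \cite{Y}, stated there under GRH, and an inspection of Yafaev's argument shows GRH is invoked only in his Theorem~2.15 to supply small split primes --- precisely the hypothesis built into the present statement. Your final sentence (``The single thing one must actually verify is that this last estimate\ldots uses no arithmetic property of $K$ beyond the split-prime hypothesis'') \emph{is} the paper's entire proof. What you have written around it is a sketch of the argument inside \cite{Y} itself: the passage via reciprocity to the class number $h(T)$ of the Mumford--Tate torus, the local index contribution $\gg p$ at primes of bad reduction, and the analytic class-number formula reducing everything to an effective lower bound on $\prod_{\chi\neq\chi_0}L(1,\chi)^{m_\chi}$. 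That outline is faithful to \cite{Y}; the one place where your mechanism is slightly streamlined compared to the source is the $L$-value step, where Yafaev works directly with the partial Euler product over the supplied small split primes rather than routing through Heilbronn's Theorem~\ref{quadsubfield}, but either device delivers the needed $(\log D_K)^{-O_G(1)}$ bound. In short: same approach, and you have unpacked what the paper merely cites.
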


\begin{proof}
The above is Theorem 2.1 in \cite{Y}. The theorem is stated with the assumptions of GRH, but this assumption is only used in Theorem 2.15 to produce small 
split primes, whose existence we are assuming in the statement of the lemma.
\end{proof}

Before proceeding with the proof of Theorem \ref{isogeny}, we make a definition: Following \cite{CO}, we define a Hilbert modular variety attached to a totally
real field $F$ of degree $g$ over $Q$ to be any irreducible component of a closed subvariety $A^{\mathcal{O}}_{g,1}\subset A_{g,1}$ over $\Qa$. 
Here $\mathcal{O}$ is an order in $F$ and $A^{\mathcal{O}}_{g,1}$ is the locus of all points $[A,\lambda]$ where the endomorphisms ring of $A$ contains 
$\mathcal{O}$ as a subring. Note that each Hilbert modular variety is a Shimura subvariety of $A_{g,1}$ corresponding to the pair 
$(\textrm{Res}_{F/\Q}SL_2,\mathbb{H}^g)$.

\begin{lemma}\label{hilbertmodular}
If $S\subsetneq A_{g,1}$ is a positive dimensional Shimura subvariety which contains a Weyl CM point, then $S$ is a Hilbert modular subvariety.
\end{lemma}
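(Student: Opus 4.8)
The plan is to analyze the Mumford–Tate group of a Weyl CM point and exploit the rigidity forced by the Weyl Galois group $W_g = (\Z/2\Z)^g \rtimes S_g$. Let $x \in S$ be a Weyl CM point with CM by a Weyl CM field $L$ of degree $2g$, and let $T = MT(x)$ be its Mumford–Tate group; $T$ is a torus, and since $x$ has CM by $L$, the cocharacter lattice of $T$ is governed by the CM type $\Sigma = (\phi_1,\dots,\phi_g)$. First I would recall the general classification: a positive-dimensional Shimura subvariety $S \subseteq A_{g,1}$ containing $x$ corresponds to a reductive $\Q$-subgroup $H \subseteq Sp_{2g}$ with $T \subseteq H$ such that $(H, X_H)$ is a sub-Shimura datum. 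Decompose $H^{\mathrm{der}}$ into $\Q$-simple factors; the point is to show, using that $T$ is a \emph{Weyl} CM torus, that the only possibility with $\dim S > 0$ is $H = \mathrm{Res}_{F/\Q} SL_2$ for $F$ the totally real subfield, which is exactly the Hilbert modular case.

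The key mechanism is that the reflex norm / CM type of a Weyl field is ``as generic as possible'': the Galois group acts on the pairs $P_1, \dots, P_g$ of embeddings through the full wreath product, so $T$ contains no nontrivial sub-torus that is ``coordinate-wise compatible'' across fewer than all $g$ factors. Concretely, I would argue as follows. The centralizer of $T$ in $GSp_{2g}$, and hence any $H$ containing $T$, must commute with the action of $L$ (or at least of the totally real $F$) on the relevant rational Hodge structure; since $[F:\Q] = g$ and $F$ acts on a $2g$-dimensional symplectic space, Schur's lemma over $F$ forces $H$ to be contained in $\mathrm{Res}_{F/\Q}(GL_2)$ intersected with the symplectic similitudes, i.e.\ in $\mathrm{Res}_{F/\Q}(GL_2)$ cut out appropriately — this gives $H^{\mathrm{der}} \subseteq \mathrm{Res}_{F/\Q} SL_2$. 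For the reverse containment, if $H^{\mathrm{der}}$ were strictly smaller it would be a product of $\mathrm{Res}_{F'/\Q} SL_2$ over a proper sub-collection, or involve a unitary group $SU$ attached to a CM subfield; I would rule these out by a Galois-theoretic argument on $W_g$: the normal closure $M$ of $L$ has $\Gal(M/\Q) = W_g$, and $W_g$ has no quotient or subgroup structure supporting such an intermediate Shimura datum with the CM type being Weyl — in particular $F$ itself has no proper subfields other than $\Q$ (the $S_g$-action on the real embeddings is transitive and primitive), so no ``partial'' Hilbert modular or unitary factor can contain $T$.

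The main obstacle I anticipate is cleanly excluding the \emph{unitary} possibilities: a priori $H$ could be (an isogeny factor of) $\mathrm{Res}_{F_0/\Q} GU(a,b)$ for some CM subfield $F_0 \subseteq L$ with a Hermitian form, and such groups do carry Shimura data mapping to $A_{g,1}$ and do contain CM tori. Here I would use the Weyl hypothesis decisively: any CM subfield $F_0$ of $L$ with $F_0 \neq L$ would correspond to a subgroup of $W_g$ containing $H = W_{g-1}$ of index dividing $g$, and one checks that the only such subgroups give back $L$ or the totally real $F$; moreover the CM type $\Sigma$, viewed through $S^{-1}$ and the reflex construction described in the excerpt, is not induced from any smaller field, so no unitary group of lower rank can have $T$ as a maximal torus with the correct Hodge cocharacter. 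Once the unitary and ``partial product'' cases are eliminated, $H^{\mathrm{der}} = \mathrm{Res}_{F/\Q} SL_2$ exactly, the Shimura datum is the Hilbert modular one attached to $F$, and since $S$ is an irreducible component of the corresponding locus $A^{\mathcal O}_{g,1}$ for a suitable order $\mathcal O \subseteq F$, $S$ is by definition a Hilbert modular subvariety, completing the proof.
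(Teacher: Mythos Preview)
The paper itself does not prove this lemma; it simply cites \cite{CO}, Lemma~3.5. So your proposal is not competing with an argument in this paper but rather attempting to reproduce Chai--Oort's.

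Your overall set-up is right: a positive-dimensional Shimura subvariety $S$ through $x$ gives a reductive $\Q$-subgroup $H$ with $T=MT(x)\subseteq H\subseteq GSp_{2g}$, and one must pin down $H$. But the step ``the centralizer of $T$ in $GSp_{2g}$, and hence any $H$ containing $T$, must commute with the action of $L$ (or at least of $F$)'' is a non-sequitur. Containing $T$ is not the same as centralizing $T$; indeed $GSp_{2g}$ itself contains $T$ and does not commute with $F$. So the Schur's-lemma conclusion $H^{\mathrm{der}}\subseteq\mathrm{Res}_{F/\Q}SL_2$ is unjustified, and with it the whole first half of your argument. Your later work excluding unitary and ``partial'' factors presupposes this containment and therefore does not stand on its own.

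The mechanism Chai--Oort actually use is the one you gesture at but do not carry out: since $T$ is a \emph{maximal} torus of $GSp_{2g}$ (its rank is $g+1$), any reductive $H$ with $T\subseteq H\subseteq GSp_{2g}$ has $T$ as a maximal torus, and the root system $R(H,T)$ is a closed, $\Gal(\overline{\Q}/\Q)$-stable sub-root-system of the $C_g$ root system $R(GSp_{2g},T)$. The Weyl hypothesis says precisely that Galois acts on the characters of $T$ through the full Weyl group $W_g=(\Z/2\Z)^g\rtimes S_g$. There are only two $W_g$-orbits on $C_g$: long roots $\{\pm 2e_i\}$ and short roots $\{\pm e_i\pm e_j\}$. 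The short roots alone are not closed (e.g.\ $(e_i-e_j)+(e_i+e_j)=2e_i$), so the only closed $W_g$-stable subsets are $\emptyset$, the long roots, and all of $C_g$. These give respectively $H=T$ (dimension zero), $H^{\mathrm{der}}\cong\mathrm{Res}_{F/\Q}SL_2$ (the Hilbert modular case), and $H=GSp_{2g}$ (excluded since $S\subsetneq A_{g,1}$). This replaces your centralizer/Schur argument cleanly and makes the separate exclusion of unitary groups unnecessary.
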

\begin{proof}
This is lemma 3.5 in \cite{CO}.
\end{proof}

\begin{proof} \emph{of Theorem \ref{isogeny}:}

Pick a sequence of principally polarized abelian varieties $y_i$ such that $y_i$ has complex multiplication
by the field $W_i$, where $W_i$ are the Weyl CM fields constructed in Lemma \ref{greatfields}. That one can do this is a standard fact in the theory of 
abelian varieties, see \cite{S} for details. Assume the statement of the theorem is false. Then $X$ contains $x_i$ such that $x_i$ is isogenous to
$y_i$ and therefore has complex multiplication by $W_i$. If Theorem 8.3.1 in \cite{KY} holds for $Z=X$ and $V=x_i$, then for $i\gg0$ we can conclude that
 $X$ contains a Shimura subvariety $S_i$ containing $x_i$. By Lemma \ref{hilbertmodular}, $S_i$ must be a Hilbert modular variety. Moreover, the $S_i$
 form an infinite set since the $W_i$ eventually have distinct totally real subfields by (2) of Lemma \ref{goodweylfields}. However, by
 Theorem 1.2 of \cite{CU}, some subsequence $S_{n_i}$ becomes equidistributed for the unique homogeneous measure corresponding to a Shimura subvariety $S\subset A_{g,1}$ 
which must contain $S_{n_i}$ for large enough $i$. We can thus conclude that $S$ is not a finite union of Hilbert modular varieties, and so 
by lemma \ref{hilbertmodular} this means that $S$ must be all of $A_{g,1}$. The $S_i$ thus become 
equidistributed for the natural measure in $A_{g,1}$, which is a contradiction to $S_i\subset X$. Hence, its enough to 
verify Theorem 8.3.1 of \cite{KY} in our case.
 
 Now, the assumption of GRH in Theorem 8.3.1 is used only in Proposition 9.1 of \cite{KY} to produce a small prime $l$ as in the following
proposition \ref{smallprime}. By proving the following proposition unconditionally, we complete the proof of Theorem \ref{isogeny}.

\begin{definition*}
Fix a positive constant $B>0$. Define $\beta_i$ to be $\beta_i = \prod_{p} (Bp)$ where the product goes over all primes $p$ such that $MT(x_i)_{/\F_p}$ is not a torus.
\end{definition*} 

From now on $D_i$ will denote the discriminant of $W_i$.
\begin{prop}\label{smallprime}

Fix $\epsilon>0, c>0$. Then for each $i\gg0$ there exists a prime $l$ such that:
\begin{enumerate}

\item $l$ is totally split in $W_i$.
\item $MT(x_i)_{/\F_l}$ is a torus
\item $l < c \log(D_i)^{6}\beta_i^{\epsilon}$.

\end{enumerate}

\end{prop}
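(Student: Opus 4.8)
The plan is to run a counting argument over primes $l$ in the window $n^5 < l < 2n^5$, exactly as in the proof of Lemma \ref{goodweylfields}, but now tracking three conditions simultaneously: that $l$ splits completely in $W_i = \K_u$, that $MT(x_i)_{/\F_l}$ is a torus, and that $l$ is small enough. Write $D_i$ for the discriminant of $W_i$; recall from Lemma \ref{goodweylfields}(3) that $\log D_i \asymp n$, so the window $n^5 < l < 2n^5$ is of size $\asymp \log(D_i)^5$, comfortably inside the bound $l < c\log(D_i)^6\beta_i^\epsilon$ demanded in (3) (since $\beta_i \geq 1$). Thus condition (3) will be automatic once we locate a suitable $l$ in this range, and the real content is conditions (1) and (2).

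First I would recall how conditions (1) and (2) are detected from Frobenius. Condition (1), that $l$ splits completely in $\K_u$, is read off from the image of $\mathrm{Frob}_u$ in $SSp_{2g}(\F_l)$: it holds precisely when the characteristic polynomial $P_u(T) \bmod l$ splits completely over $\F_l$ with the correct multiplier, i.e. $\mathrm{Frob}_u$ lands in the set $E_l$ of Lemma \ref{goodweylfields}. Condition (2), that $MT(x_i)_{/\F_l}$ is a torus, is the condition that the reduction of the abelian variety at a prime above $l$ is ordinary (equivalently, by Serre–Tate / the theory in \cite{CO}, that $P_u(T)$ has $l$-adic Newton polygon with slopes $0$ and $1$ each with multiplicity $g$, i.e. half the roots of $P_u$ are $l$-adic units). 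Both of these are conditions on the conjugacy class of $\mathrm{Frob}_u$ in $SSp_{2g}(\F_l)$ — more precisely, ``$l$ totally split and ordinary'' corresponds to $\mathrm{Frob}_u$ lying in a union $E_l' \subset E_l$ of conjugacy classes of $SSp_{2g}(\F_l)$ of size a positive proportion of $|Sp_{2g}(\F_l)|$, bounded below uniformly in $l$ (one can, e.g., restrict to the classes whose characteristic polynomial is $\prod_{k=1}^g (T-a_k)(T-\gamma^n a_k^{-1})$ for distinct units $a_k$, which already forces both total splitting and ordinarity).

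Then I would apply Theorem \ref{averagefrob} with the set of primes $\{m_1,\dots,m_h, l_1,\dots,l_{r_n}, l\}$ and the conjugacy class $C = \prod_i C_i \times \prod_j D_j \times E_l'$, exactly as in Lemma \ref{goodweylfields} but with $E_l'$ in place of $E_l$. The same error analysis ($|G_0| \ll q^{n/4}$, $|SSp_{2g}(\F_l)| \ll n^{O(1)}$, main term $\asymp q^n \cdot |E_l'|/|Sp_{2g}(\F_l)| \gg q^n$ per prime) shows that among the $u \in U(\F_{q^n})$ cut out by the $C_i$ and $D_j$ — which all give Weyl CM fields of the right discriminant size, and which we have already arranged to realize our sequence $W_i$ — a positive proportion have $\mathrm{Frob}_u \in E_l'$ for a positive proportion of primes $l$ in $(n^5, 2n^5)$. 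Averaging over $l$ in the window and over the relevant $u$, some field $W_i = \K_u$ in our sequence has at least one prime $l$ in $(n^5, 2n^5)$ that splits completely in $W_i$ with ordinary reduction, giving (1), (2), (3) for all large $i$.

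The main obstacle is bookkeeping rather than any genuinely new input: I must make sure the construction is compatible with the one already performed in Lemmas \ref{goodweylfields} and \ref{greatfields}, so that the $l$ produced here attaches to the \emph{same} sequence $W_i$ (with its split-prime property (1), its field-avoidance property (2), and its discriminant bounds (3)) rather than to some ad hoc new sequence — this is handled by simply adding the prime $l$ and the class $E_l'$ to the finite list of local conditions, which does not disturb the earlier ones. The only point requiring a little care is the translation ``$MT(x_i)_{/\F_l}$ is a torus'' $\Longleftrightarrow$ ``$\mathrm{Frob}_u$ acts with the ordinary Newton polygon at $l$'', which follows from the identification of $MT(x_i)$ with the Mumford–Tate group of the CM type of $W_i$ together with the standard fact that its reduction mod $l$ is a torus exactly at primes of ordinary reduction; I would cite \cite{CO} for this and note that the genericity built into $E_l'$ makes the condition manifestly a nonempty union of conjugacy classes of positive relative size, uniformly in $l$, which is all the counting argument needs.
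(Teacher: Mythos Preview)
Your proposal contains a genuine gap: the identification of condition (2) is incorrect. The Mumford--Tate group $MT(x_i)$ of a CM point is already a torus over $\Q$; the condition ``$MT(x_i)_{/\F_l}$ is a torus'' asks whether the \emph{integral model} of this $\Q$-torus has torus fibre at $l$, i.e.\ whether $l$ is a good prime for the group scheme. This has nothing to do with ordinary reduction of the abelian variety, and your proposed criterion in terms of the $l$-adic Newton polygon of $P_u(T)$ is vacuous: for $l\neq q$ the constant term of $P_u(T)$ is $\pm q^{ng}$, an $l$-adic unit, so every root of $P_u$ is an $l$-adic unit and the $l$-adic Newton polygon is identically zero. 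You are conflating the $q$-Frobenius $\pi_u$ of the curve $C_u/\F_{q^n}$ with a putative $l$-Frobenius of the reduction of $x_i$; these are unrelated, and the latter is not visible in the monodromy picture of Theorem~\ref{averagefrob}. Consequently there is no set $E_l'\subset SSp_{2g}(\F_l)$ of the kind you describe, and the averaging argument cannot get off the ground. There is also a structural issue: the sequence $W_i$ is fixed once Lemma~\ref{greatfields} has been applied, so ``adding the prime $l$ and the class $E_l'$ to the finite list of local conditions'' would in general produce a different sequence, not the one the proposition is about.

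The paper's proof proceeds quite differently and never attempts to characterise condition~(2). It argues indirectly: either one of the $\asymp (\log D_i)^5/\log\log D_i$ small split primes already furnished by Lemma~\ref{goodweylfields} satisfies (2), and we are done; or all of them fail (2), which forces $\beta_i \gg \exp((\log D_i)^4)$. In the latter case the allowed range $l < c(\log D_i)^6\beta_i^{\epsilon}$ becomes enormous, and the absence of Siegel zeros for $\zeta_{V_i}$ (secured in Lemma~\ref{greatfields}) lets one invoke the effective Chebotarev theorem (Theorem~\ref{primenumbertheorem}) to count $\gg \beta_i^{\epsilon}/(\epsilon\log\beta_i)$ totally split primes in that range. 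If every one of these also failed (2) we would get $\beta_i \gg 2^{\beta_i^{\epsilon}/(\epsilon\log\beta_i)}$, which is absurd for large $i$. Thus the whole point of passing to the subsequence with no exceptional zeros --- which your approach never uses --- is precisely to make this bootstrap work.
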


\begin{proof}

By construction, there is a constant $c_g$ such that there are at least $$c_g\frac{(\log D_i)^5}{\log(\log(D_i))}$$ primes $p\leq 2\log(D_i)^5$ split completely in
$W_i$. Since $\beta_i$ is bounded from below (there are only finitely many primes less than $B$) we see that for $i\gg0$ all these primes satisfy conditions
(1) and (3). We are thus done unless $MT(x_i)_{/\F_p}$ is not a torus for all these primes $p$. Assume this is the case from now on. We thus have 
\begin{equation}\label{useful}
\beta_i\gg e^{(\log D_i)^4}.
\end{equation}

By Theorem \ref{primenumbertheorem}, for $X\gg e^{(\log D_i)^3}$, the number of totally split primes in $W_i$ less than $X$ is 
$$\pi_{W_i}(X)=\frac{1}{2^g\cdot g!}\cdot\frac{X}{\log(X)} + o(\frac{X}{\log(X)})$$ since by construction the Dedekind zeta function $\zeta_{V_i}(s)$ has no 
exceptional zero, where we define $V_i$ do be the Galois closure of $W_i$. Thus, for $i\gg0$ we have $$\pi_{W_i}(X)\gg \frac{X}{\log(X)}.$$
Since for large enough $i$ we have $e^{(\log D_i)^3}< c \log(D_i)^{6}\beta_i^{\epsilon}$, there are at least 
$\frac{\beta_i^{\epsilon}}{\epsilon\log(\beta_i)}$ totally split primes $l$ in $W_i$ such that $l<c \log(D_i)^{6}\beta_i^{\epsilon}$ for large enough $i$. Now, 
one of these primes $l$ must be such that $MT(x_i)_{/\F_l}$ is a torus, since otherwise we would have $$\beta_i\gg 
2^{\frac{\beta_i^{\epsilon}}{\epsilon\log(\beta_i)}},$$ which is 
false for large enough $i$ by equation (\ref{useful}). This completes the proof.

\end{proof}

\end{proof}

\end{document}